\author{Francesca Angrisani}
\newtheorem{thm}{Theorem}[]
\newtheorem{cor}[thm]{Corollary}
\newtheorem{lem}[thm]{Lemma}
\theoremstyle{definition}
\newtheorem{mydef}{Definition}[section]
\theoremstyle{remark}
\theoremstyle{remark}
\newtheorem{rmk}{Remark:}
\def\Xint#1{\mathchoice 
	{\XXint\displaystyle\textstyle{#1}}%
	{\XXint\textstyle\scriptstyle{#1}}%
	{\XXint\scriptstyle\scriptscriptstyle{#1}}%
	{\XXint\scriptscriptstyle\scriptscriptstyle{#1}}%
	\!\int} 
\def\XXint#1#2#3{{\setbox0=\hbox{$#1{#2#3}{\int}$} 
		\vcenter{\hbox{$#2#3$}}\kern-.5\wd0}} 
\def\fint{\Xint -}
\begin{document} 

	\begin{center}
		\Large
{\bfseries A new norm on $BLO$, matters of approximability and duality}

\end{center}

\begin{center}
	FRANCESCA ANGRISANI \footnote{ \noindent Department of Information Engineering, Computer Science and Mathematics (DISIM), \\ Via Vetoio, Coppito 1, L'Aquila.  \\ francesca.angrisani@univaq.it \\ Present address: Viale di Augusto n.122 -80125 Napoli - since 15th July to 31st August: only via mail}
	
\end{center}

\normalsize

\noindent

\medskip \noindent
\textbf{Abstract -- } In this paper, we obtain an alternative expression for the distance of a function in $BLO$ from the subspace $L^\infty$. The distance is the one induced by choosing a new "norm" on $BLO$, equivalent to the usual one and that has the advantage to make the distance to $L^{\infty}$ explicitely and exactly computable. We also prove that this new norm has got the norm attaining property on $BLO$. \\ We address the issue of approximability by truncation as it is not obvious even in the closure of $L^\infty$ in $BLO$ that functions can be approximated by truncation. As a matter of fact, a few examples of this are presented. The easier issue of approximability by convolution is also addressed.\\
In the last section we finally prove that the "dual" of $VLO$ is isometric to $VMO^{\ast}$.\\
 \\
 \emph{Key words:} $BLO$, $L^\infty$, distance, truncation, approximability, duality.\\
 \\
 \emph{Mathematics Subject Classification:}  $46E30$  \\
	\section{Introduction}
	As it is well known, Lebesgue spaces $L^p(\Omega,\mu)$ and many of their generalizations (Grand Lebesgue, Lorentz, Orlicz, Marcinkiewicz spaces) are defined by conditions that, in some intuitive sense, limit the size of the function, but in a way such that the distribution of the values of the function is irrelevant.\\ In other words, one measures the mass of the function irrespective of where it is allocated. This is formalized by saying that these spaces are \emph{rearrangement invariant}, that is, given two functions $f$ and $g$ such that $$|\{x\in \Omega: |f(x)|\ge \lambda\}|=|\{x\in \Omega: |g(x)|\ge \lambda\}|, \quad \forall \lambda\ge0,$$ then $f$ belongs to the space if and only if $g$ does; two functions satisfying the above property are called equimeasurable. In the aforementioned Lebesgue spaces, for example, this is made clear by the so-called Cavalieri Principle.\\ If a function $f$ belongs to a given rearrangement invariant function space $X$, then the same can be said of its \emph{non-increasing rearrangement} $$f^*: t \in [0,|\Omega|]\mapsto \inf\Big\{\lambda: |\{x\in \Omega: |f(x)|\ge \lambda\}|\le t\Big\} \in \mathbb{R}^+,$$ which is a non-increasing function that is equimeasurable to the starting function $f$ (see \cite{Koren}).\\
	Not all function spaces, though, are rearrangement invariant.\\
	An interesting aspect of functions is the way they oscillate, which is neglected in rearrangement invariant spaces, is crucial to the definition of other interesting function spaces, as spaces defined by mean of oscillation. In the mathematical literature, a plethora of different meanings and formal definitions have been associated to the word "oscillation".\\ In reality in many cases spaces defined by means of oscillation, in many different sense of the word, fit into two modes: spaces in which oscillation is bounded and spaces in which oscillation is vanishing, i.e. arbitrarily small when measured on a sufficiently small set. This structure, called \emph{o-O type structure}, fits into a very general and abstract framework for couple of non-reflexive Banach function spaces, introduced by K.M. Perfect in 2013 (see \cite{Perfect1},\, \cite{Perfect2}, \,\cite{Perfect3}). He shows that an example of $(o,O)$ pair is $(BMO,VMO)$. \\
	Other examples of $(o,O)$ pair of functional spaces are: $(B,B_0)$ (\cite{Sbordone}) (B is  a very large space of functions, that generalize $BMO$ introduced by Brezis, Bourgain and Mironescu) \cite{B} (see also \cite{mosca} and \cite{Fusco} ) and $B_0$ an important subspace of $B$),  the well known $(Lip_{\alpha},lip_{\alpha})$ (see \cite{trottola}), Orlicz space with $\Delta_0$ condition and their {\em Morse subspace }$(M^{\psi},L^{\psi})$ (see \cite{orlicz}).\\
	 The framework introduced by Perfect often is also useful to obtain some information about dual spaces, distance formulas and result of atomic decomposition of the predual of the $O$-space (see for example \cite{fen}, \cite{schiattasbord}, \cite{gg}, \cite{Sbordone}, \cite{trottola}).\\
	   However space defined by means of oscillation has got also many application.
	 In fact, for example, $BMO$ was introduced by John and Nirenberg (\cite{lorigineditutto}) to approach a problem of elasticity theory, in particular to describe the  elastic strain. \\
	 In this paper we focus on two subcones of $BMO$ and $VMO$ called respectively $BLO$ and $VLO$. \\
	 Also this two cones are defined by mean of oscillation and it is natural to imagine the couple $(BLO,VLO)$ as an $o-O$ pair. \\ Unfortunately the theory introduced by Perfect can not be applied to these classes of functions, as they are only cones and not vectorial spaces. \\
	  $BMO$ and $BLO$ which are function classes defined by imposing a bound on two different kinds of oscillation, the so called mean oscillation of $f$ over $I$, i.e. 
	 $$\frac{1}{|I|}\displaystyle\int_I \left|f(x)-\frac{1}{|I|}\displaystyle\int_I f(y)\,dy\right|\,dx.$$
	 and the so called lower oscillation of $f$ over $I$, i.e.
	 $$\frac{1}{|I|}\displaystyle\int_I f(x)\,dx- \inf\limits_{x \in I}f(x)$$
	 respectively, where $I$ is an interval if we are dealing with functions of a single variable, while it is a hypercube with sides parallel to the coordinate hyperplanes if we wish to define the function class in $\mathbb{R}^n$.\\
   	In particular, in this paper we introduce a new equivalent norm in $BLO$, inspired by some characterizations of $BLO$ by Coifman and Rochberg in 1980 (see \cite{coif}), that is much more natural when addressing the aforementioned distance problem.\\
   	We also prove the our new norm has got the \emph{norm attaining property}, i.e. our norm is attained on the closed unit balls of  $BLO$. This property does not hold for classical norm on $BLO$ (it holds only on  $VLO$ functions \cite{giak}).\\
   		In the second part of paper we discuss the behaviour of $BLO$ functions with respect to truncation.\\ As the norm on $BLO$ is defined by means of oscillation, we may expect it to have strange behaviour with respect to truncation, which is a procedure acting on the size of the function.\\ That is exactly what happens: we show that bounded functions, functions that can be $BLO$-approximated by their truncations and functions that can be $BLO$-approximated by bounded functions are three completely different concepts, forming subspaces  $L^\infty,T,\overline{L^\infty}^{BLO}$ in strict consecutive inclusion: $$L^\infty \subsetneq T\subsetneq \overline{L^\infty}^{BLO}.$$ Examples are provided in the section \ref{approx}. \\
	Then we also deal the easier issue of approximation by convolution. \\
	Another reason for which $BMO$ is a very important space is that it plays the same role in the theory of Hardy spaces $H^p$ that the space $L^{\infty}$ of essentially bounded functions plays in the theory of $L^p$ spaces. \\
	In fact C. Fefferman and E.M. Stein in 1972 proved that $H^1$ is the predual of $BMO$ (see \cite{feff}). \\
	Instead it is still unknown what is the dual space of $BMO$. \\
	So, finally we characterize the "dual" of $VLO$ (cleary we will use a weak definition of dual of $VLO$): in particular we prove that it is isometric to $VMO^*$.
\section{Notation}
	\begin{mydef}
		We say that a real valued locally integrable function $f(x) \in L^1_{loc}(\mathbb{R})$ is of {\em Bounded Mean Oscillation} ($f(x) \in BMO(\mathbb{R})$) if:
		\begin{equation}
		\sup\limits_I \fint_I |f(x)-f_I|\,dx =\|f\|_{BMO}<\infty
		\end{equation}
		where $f_I$ denotes $\fint_I f(x)\,dx=\frac{1}{|I|}\int_I f(x)\,dx$ and $I\subset \subset \mathbb{R}$ is the generic compact interval.\\
	\end{mydef}
\noindent
	$BMO$, first introduced by John and Niremberg in 1961, is a vector space and $\|\cdot\|_{BMO}$ is a seminorm whose kernel is the set of all functions which are almost everywhere equal to a constant.\\ If we consider $BMO$ modulo this subspace, we get a Banach space.\\
	\begin{mydef} We say that a $BMO (\mathbb{R})$ function $f(x)$ is of {\em Vanishing Mean Oscillation} ($f \in VMO (\mathbb{R})$) if it also satisfies:
		\begin{equation}
		V(f)=\limsup\limits_{|I|\to 0}  \fint_I |f(x)-f_I|\,dx =0.
		\end{equation}
	\end{mydef}
\begin{mydef} We say that a real valued locally integrable function $f(x) \in L^1_{loc}(\mathbb{R})$ has {\em Bounded Lower Oscillation} ($f(x) \in BLO(\mathbb{R})$) if
	\begin{equation} \label{treno}
	\sup\limits_I \fint_I [f(x)-\inf_If]\,dx = \sup\limits_I [f_I-\inf_If]= \|f\|_{BLO}<\infty.
	\end{equation}
\end{mydef}
\noindent
As for $BMO$, we will think of the class of $BLO(\mathbb{R})$ functions modulo constants. 
\begin{mydef} We say that a $BLO(\mathbb{R})$ function has { \em Vanishing Lower Oscillation} ($f \in VLO(\mathbb{R})$) if it also satisfies:
	\begin{equation}
	W(f)=\limsup\limits_{|I|\to 0}  [f_I-\inf_If] =0.
	\end{equation}
\end{mydef}
Of course $\|\cdot\|_{BLO}$ vanishes only on the set of almost everywhere constant functions, is sub-additive and homogenous for positive scalars but $BLO\not =-BLO$\footnote{An example of a function $f(x)=-\log(|x|) \in BLO$ but not in $-BLO$ was pointed out by Korey \cite{KOR}}. For this reason and this reason only, $BLO$ is not a vector space (but only a cone) and $\|\cdot\|_{BLO}$ is not a norm on it in the classical sense. Even if it lacks homogeneity for negative constants, following other authors (see \cite{KOR} and \cite{coif}), we will call this quantity a norm on $BLO(\mathbb{R})$ anyways.\\
Also the class of all $VLO$ functions is not a vector space for the same reason.\\
Sometimes we will use the notation $BMO$, $VMO$ $BLO$ and $VLO$: we always intend $BMO(\mathbb{R})$, $VMO(\mathbb{R})$ $BLO(\mathbb{R})$ and $VLO(\mathbb{R})$.\\
It is obvious, by the definitions, that:
\begin{equation} \label{raddoppionorme}
\|f\|_{BMO}\le 2\|f\|_{BLO},\quad \|f\|_{BMO},\|f\|_{BLO}\le 2 \|f\|_{L^\infty} \quad \forall f \in L^1_{loc}. 
\end{equation}
Let us know introduce the $A_1$ class of Muckenhoupt. \\
\begin{mydef}
	A weight $w: \mathbb{R} \rightarrow [0, +\infty[$ belongs to the $A_1$ class of Muckenhoupt if:\\
	$$A_1(w)=\sup_{I \subset \subset \mathbb{R}} \frac{\fint_I w(x) dx}{inf_I w} < \infty$$ or equivalently: 
\end{mydef}
\noindent 
Inspired by a work of Coifman and Rochberg on a decomposition of $A_1$ weights through the Hardy-Littlewood maximal operator and by its consequences on $BLO$, we define a new, equivalent norm on BLO.\\
This new norm $\|\cdot\|_{BLO}'$ will have the advantage of making
\begin{equation*}
dist_{BLO'}(f,L^\infty):=\inf\limits_{g \in L^\infty}\|f-g\|_{BLO}'\footnote{The $||\cdot||_{BLO}'$ will be define in  \eqref{dog}.} 
\end{equation*} 
explicitely and exactly computable.\\
As a matter of fact, we will prove the equality:
\begin{equation}\label{tesi}
dist_{BLO'}(f,L^\infty)=\sigma(f):=\inf\{\mu>0:\ e^{\frac{f}{\mu}} \in A_1\}
\end{equation} 
showing that, in this new norm, the distance from $L^\infty$ is exactly the reciprocal of the critical exponent for which the function $e^{f}$ is in $A_1$. The right hand side in \eqref{tesi} is well defined for a known duality between $A_1$ and $BLO$ which is presented in the following section.
In a previous paper, (see \cite{mio}), we proved that
 \begin{thm}
	\label{theorema4}
	There exist two absolute constants $d_1,d_2>0$ such that for every real valued function $f \in BLO(\mathbb{R})$ the following inequalities hold:
	\begin{equation}
	d_1\sigma(f)\le dist_{BLO}(f,L^\infty) \le d_2\sigma(f)
	\end{equation}
	where:
	\begin{equation}
	\sigma(f)=\inf\{\mu>0:\ e^{\frac{f}{\mu}} \in A_1 \}
	\end{equation}
\end{thm}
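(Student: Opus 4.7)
The plan is to prove the two inequalities separately, both by exploiting the bridge between $BLO$ and the Muckenhoupt class $A_1$. The two elementary inputs I would use throughout are the Jensen-type bound $\|h\|_{BLO}\le \log A_1(e^h)$, valid whenever $e^h\in A_1$ (it follows from $\fint_I h\le \log\fint_I e^h\le \inf_I h+\log A_1(e^h)$), and the stability of $A_1$ under multiplication by a function bounded between positive constants, which implies that $A_1(wh)$ and $A_1(w)$ are comparable.

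For the lower bound, I would first observe that $\sigma$ is invariant on cosets of $L^\infty$: if $g\in L^\infty$, the factor $e^{-g/\mu}$ lies between two positive constants, so $e^{(f-g)/\mu}\in A_1$ iff $e^{f/\mu}\in A_1$, and hence $\sigma(f-g)=\sigma(f)$. A John--Nirenberg type argument, made particularly clean in $BLO$ by the positivity of $f-\inf_I f$, produces a universal $c_0>0$ with $\sigma(h)\le c_0\|h\|_{BLO}$ for every $h\in BLO$: concretely, one proves the exponential decay $|\{x\in I:\,h(x)-\inf_I h>\lambda\}|\le C_1 e^{-C_2\lambda/\|h\|_{BLO}}|I|$ by a Calder\'on--Zygmund stopping-time argument and integrates. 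Combining the two facts, $\sigma(f)=\sigma(f-g)\le c_0\|f-g\|_{BLO}$ for every $g\in L^\infty$, and taking the infimum delivers the lower bound with $d_1=1/c_0$.

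For the upper bound, fix $\mu>\sigma(f)$ so that $w:=e^{f/\mu}\in A_1$. The Coifman--Rochberg structural theorem from \cite{coif} allows one to write $w=b\,(Mf_0)^{1/2}$ with $f_0\ge 0$, $b$ bounded between positive constants, and $A_1\bigl((Mf_0)^{1/2}\bigr)\le c$ for a universal $c$. Taking logarithms and multiplying by $\mu$ gives $f=\mu\log b+\tfrac{\mu}{2}\log Mf_0$. The first term is in $L^\infty$ (its precise $L^\infty$-norm is immaterial, since $dist_{BLO}$ only requires membership in $L^\infty$); for the second, the Jensen-type bound yields
\[\bigl\|\tfrac{\mu}{2}\log Mf_0\bigr\|_{BLO}\le \mu\,\log A_1\bigl((Mf_0)^{1/2}\bigr)\le \mu\log c,\]
so $dist_{BLO}(f,L^\infty)\le \mu\log c$. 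Letting $\mu\downarrow\sigma(f)$ delivers the upper bound with $d_2=\log c$.

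The main technical obstacle is concentrated in the two classical inputs invoked above: the $BLO$ version of John--Nirenberg giving the universal self-improvement $\sigma(h)\le c_0\|h\|_{BLO}$, and the Coifman--Rochberg representation of $A_1$ weights with a \emph{universal} bound on $A_1\bigl((Mf_0)^{1/2}\bigr)$ (the universality is crucial, since it decouples the constant from $A_1(w)$, which could blow up as $\mu\downarrow\sigma(f)$). Once these are in hand, the rest of the argument reduces to bookkeeping with Jensen's inequality and the elementary stability of $A_1$ under multiplication by bounded factors.
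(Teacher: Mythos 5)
Your lower bound is sound and, apart from packaging, rests on the same ingredients the paper uses: the invariance $\sigma(f-g)=\sigma(f)$ for $g\in L^\infty$ (multiplication of an $A_1$ weight by a factor trapped between positive constants preserves $A_1$), the Jensen-type estimate $\|h\|_{BLO}\le\log A_1(e^h)$ (the first bullet of Lemma \ref{dualita}), and the John--Nirenberg self-improvement $\sigma(h)\le c_0\|h\|_{BLO}$, which is the second bullet of Lemma \ref{dualita} combined with $\|h\|_{BMO}\le 2\|h\|_{BLO}$ from \eqref{raddoppionorme}. Note that the paper itself does not reprove the quoted theorem directly: it reaches a sharper form by introducing the equivalent norm $\|\cdot\|_{BLO}'$ of \eqref{dog}, proving two-sided equivalence with $\|\cdot\|_{BLO}$, and then showing the exact identity $dist_{BLO'}(f,L^\infty)=\sigma(f)$; your direct two-inequality argument is a legitimate streamlining of that machinery.

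The genuine problem is in your upper bound: the Coifman--Rochberg theorem does not let you write an arbitrary $w\in A_1$ as $b\,(Mf_0)^{1/2}$ with $b$ bounded between positive constants. The correct statement (Theorem \ref{coifdec}) gives $w=b\,(Mg)^{\varepsilon}$ with an exponent $\varepsilon=\frac{1}{1+\eta}\in[0,1)$ dictated by the reverse H\"older exponent of $w$, and $\varepsilon$ cannot be forced down to $1/2$ in general: for $w(x)=|x|^{-\alpha}$ with $\tfrac12<\alpha<1$, which is an $A_1$ weight on $\mathbb{R}$, a representation $w=b\,(Mf_0)^{1/2}$ with $A\le b\le 1$ would force $Mf_0\approx|x|^{-2\alpha}$ near the origin with $2\alpha>1$, contradicting the fact that the maximal function of an $L^1_{loc}$ function is, on every bounded interval, a weak-$L^1$ function plus a bounded one, so its local distribution function must decay at least like $1/\lambda$. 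Fortunately the defect lies only in the quotation, not in the strategy: take the correct decomposition $e^{f/\mu}=b\,(Mg)^{\varepsilon}$ for $\mu>\sigma(f)$, so $f=\mu\log b+\mu\varepsilon\log Mg$ with $\mu\log b\in L^\infty$; then, exactly as in Step 3 of the paper's norm-equivalence proof, write $\mu\varepsilon\log Mg=2\mu\varepsilon\log\bigl[(Mg)^{1/2}\bigr]$ and use positive homogeneity of $\|\cdot\|_{BLO}$ together with Lemma \ref{dualita} and Lemma \ref{powersofm} to obtain $\|\mu\varepsilon\log Mg\|_{BLO}\le 2\mu\varepsilon\log A_1\bigl((Mg)^{1/2}\bigr)\le 2\mu\log c_6$, where $c_6$ is absolute because the $A_1$ constant of $(Mg)^{1/2}$ does not depend on $g$. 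Since $dist_{BLO}(f,L^\infty)\le\|f-\mu\log b\|_{BLO}$, letting $\mu\downarrow\sigma(f)$ gives $dist_{BLO}(f,L^\infty)\le 2\log(c_6)\,\sigma(f)$; with this repair your argument is complete, with $d_2=2\log c_6$ in place of your $\log c$.
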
 \noindent
In other contests similar results have been obtained also in \cite{Garnett}, \cite{SAR},  \cite{cap}, \cite{CAR}, \cite{nando} and \cite{form}. \\
Our new result will be a quantitative improvement of Theorem $\ref{theorema4}$ of \cite{mio}.
\section{Preliminaries}
We begin by quoting an important result concerning $BMO$.
\begin{lem}[{\bfseries John-Nirenberg inequality, \cite{lorigineditutto}}] \label{JNLEMMA}
	There exist two absolute constants $c_1,c_2>0$ such that, for every $f \in BMO$, $I \subset \mathbb{R}$ and $\lambda>0$ we have:
	\begin{equation} \label{JN}
	|\{t \in I:\ |f(t)-f_I|>\lambda\}|\le c_1e^{-c_2\lambda/\|f\|_{BMO}}|I|.
	\end{equation}
\end{lem}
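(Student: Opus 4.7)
The plan is to prove this by a Calderón--Zygmund stopping time argument, the standard route to John--Nirenberg. By homogeneity I may reduce to the case $\|f\|_{BMO}=1$ (replacing $f$ by $f/\|f\|_{BMO}$ simply rescales $\lambda$). Introduce the auxiliary function
\begin{equation*}
F(\lambda)=\sup_{I\subset\subset\mathbb{R}}\frac{|\{t\in I:\ |f(t)-f_I|>\lambda\}|}{|I|},
\end{equation*}
so that the goal becomes showing $F(\lambda)\le c_1 e^{-c_2\lambda}$ for absolute constants $c_1,c_2>0$. This sup-normalization is what allows me to transfer local information on one interval to uniform information, and will be the device that drives the iteration.

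Next, fix an interval $I_0$ and a threshold $b>1$ to be chosen later. I would run a bisection stopping-time argument on $I_0$: split $I_0$ into its two halves, then each half into halves, and so on, stopping at any dyadic subinterval $J$ as soon as $\fint_J |f-f_{I_0}|>b$. Since $\fint_{I_0}|f-f_{I_0}|\le 1<b$, one never stops at $I_0$ itself, and the selected intervals $\{I_j\}$ are pairwise disjoint. By construction the parent of each $I_j$ has mean $\le b$, hence $b<\fint_{I_j}|f-f_{I_0}|\le 2b$, and summing gives the packing estimate $\sum_j |I_j|\le \frac{1}{b}\int_{I_0}|f-f_{I_0}|\le \frac{|I_0|}{b}$. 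Moreover, by the Lebesgue differentiation theorem, $|f(t)-f_{I_0}|\le b$ for a.e.\ $t\in I_0\setminus\bigcup_j I_j$, and on each selected interval $|f_{I_j}-f_{I_0}|\le \fint_{I_j}|f-f_{I_0}|\le 2b$.

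Now for $\lambda>2b$ the exceptional set $\{t\in I_0:\ |f(t)-f_{I_0}|>\lambda\}$ is contained (modulo null sets) in $\bigcup_j I_j$, and on each $I_j$ the inclusion $\{|f-f_{I_0}|>\lambda\}\subseteq\{|f-f_{I_j}|>\lambda-2b\}$ holds. Using the definition of $F$ and the packing estimate,
\begin{equation*}
\frac{|\{t\in I_0:\ |f-f_{I_0}|>\lambda\}|}{|I_0|}\le \sum_j \frac{|I_j|}{|I_0|}\cdot \frac{|\{t\in I_j:\ |f-f_{I_j}|>\lambda-2b\}|}{|I_j|}\le \frac{1}{b}F(\lambda-2b).
\end{equation*}
Taking the supremum over $I_0$ yields the recursive bound $F(\lambda)\le \frac{1}{b}F(\lambda-2b)$ for $\lambda>2b$, together with the trivial bound $F(\lambda)\le 1$.

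The last step is to iterate: choosing, for instance, $b=e$ and writing $\lambda=2ek+r$ with $k\in\mathbb{N}$ and $0\le r<2e$, a $k$-fold iteration gives $F(\lambda)\le e^{-k}\le e\cdot e^{-\lambda/(2e)}$, which is the desired estimate with explicit constants $c_1=e$ and $c_2=1/(2e)$. The main technical obstacle, as usual in this argument, is running the stopping time correctly so that the packing bound $\sum_j|I_j|\le |I_0|/b$ and the almost-everywhere bound outside $\bigcup I_j$ both come out cleanly; once those two ingredients are in place the recursion and its iteration are essentially forced.
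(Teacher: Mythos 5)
Your argument is correct, but note that the paper does not actually prove Lemma \ref{JNLEMMA}: it is quoted as a classical result with a citation to John--Nirenberg, so there is no in-paper proof to compare against. What you supply is the standard Calder\'on--Zygmund stopping-time proof, and all the key ingredients are in place: the normalization $\|f\|_{BMO}=1$, the sup-normalized distribution function $F(\lambda)$, the packing bound $\sum_j|I_j|\le |I_0|/b$ from the stopping condition, the a.e.\ bound $|f-f_{I_0}|\le b$ off $\bigcup_j I_j$ via Lebesgue differentiation along the dyadic filtration, the transfer $|f_{I_j}-f_{I_0}|\le 2b$, and the resulting recursion $F(\lambda)\le \tfrac{1}{b}F(\lambda-2b)$, which iterates to $F(\lambda)\le e\,e^{-\lambda/(2e)}$ with $b=e$. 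Two trivial loose ends you may wish to flag for completeness: the degenerate case $\|f\|_{BMO}=0$ (where $f$ is a.e.\ constant and the left-hand side vanishes, so the division in the normalization is moot), and the edge case $r=0$ in the iteration, where you can only iterate $k-1$ times, which still gives $F(\lambda)\le e^{-(k-1)}=e\,e^{-\lambda/(2e)}$; also for $0<\lambda\le 2b$ the trivial bound $F(\lambda)\le 1\le e\,e^{-\lambda/(2e)}$ covers the claim. With those remarks the proof is complete and yields the stated inequality with explicit absolute constants $c_1=e$, $c_2=1/(2e)$, which is more than the paper itself provides, since it relies on the cited literature for this lemma.
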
\noindent
\medskip
\medskip
There is a strong and useful connection between $BMO$ and $A_2$  (see \cite{coif} and also \cite{Sbord}). \\
This next lemma, due to Coifman and Rochberg, shows, instead, a less known connection between the Muckenhoupt class $A_1$ and $BLO$.
 
\begin{lem}[{\bfseries R.R. Coifman, R. Rochberg}] \label{dualita}
	Let $w \in L^1_{loc}(\mathbb{R})$. Then:
	\begin{itemize}
		\item $w \in A_1 \Rightarrow \log(w) \in BLO$ and $\|\log(w)\|_{BLO}\le \log(A_1(w))$
		\item $w \in BLO \Rightarrow \ e^{w/\mu}\in A_1, \quad \forall \mu>\frac{\|w\|_{BMO}}{c_2}$
	\end{itemize}
	where $c_2$ is the constant from (\ref{JN}) in Lemma \ref{JNLEMMA}.
	\end{lem}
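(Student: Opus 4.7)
\medskip\noindent\textbf{Proof proposal.} The two implications are independent and I would treat them separately, each with a one-line idea followed by a short computation.

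For the first implication, the idea is simply concavity of the logarithm. Starting from the $A_1$ inequality $\fint_I w \le A_1(w)\,\inf_I w$, taking logarithms gives
\[
\log\!\left(\fint_I w\right) \le \log A_1(w) + \inf_I \log w.
\]
Jensen's inequality applied to the concave function $\log$ provides $\fint_I \log w \le \log(\fint_I w)$, so subtracting $\inf_I \log w$ from both sides of the combined bound yields $\fint_I \log w - \inf_I \log w \le \log A_1(w)$, and taking the supremum over $I$ closes the argument.

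For the second implication, the strategy is to reduce everything to John--Nirenberg (Lemma \ref{JNLEMMA}). Fixing an interval $I$ and $\mu > \|w\|_{BMO}/c_2$, I would split
\[
\fint_I e^{w/\mu} \;=\; e^{w_I/\mu}\,\fint_I e^{(w-w_I)/\mu} \;\le\; e^{w_I/\mu}\,\fint_I e^{|w-w_I|/\mu},
\]
expand the second factor via the layer-cake formula as $1 + |I|^{-1}\int_0^\infty e^s |\{x \in I : |w(x)-w_I|>s\mu\}|\,ds$, and insert John--Nirenberg with threshold $\lambda = s\mu$. The question then reduces to the convergence of
\[
\int_0^\infty e^{s(1-c_2\mu/\|w\|_{BMO})}\,ds,
\]
which is precisely where the hypothesis $\mu > \|w\|_{BMO}/c_2$ enters, producing a finite constant $C = C(\|w\|_{BMO},\mu)$. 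The $BLO$ bound $w_I - \inf_I w \le \|w\|_{BLO}$ then converts $e^{w_I/\mu}$ into $e^{\|w\|_{BLO}/\mu}\,\inf_I e^{w/\mu}$, giving an $A_1$ estimate uniform in $I$.

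The delicate point, more than a genuine obstacle, is the sharp threshold on $\mu$: it is forced by the exponential decay rate in John--Nirenberg, and using that lemma as a black box is exactly what makes the constant $c_2$ in the statement of Lemma \ref{dualita} coincide with the one in \eqref{JN}. A direct good-$\lambda$ or Calder\'on--Zygmund argument would also yield $e^{w/\mu} \in A_1$ for $\mu$ large, but would not automatically produce the optimal threshold.
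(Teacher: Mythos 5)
Your proposal is correct, and both halves are the standard Coifman--Rochberg arguments: Jensen plus monotonicity of $\log$ (so that $\log(\inf_I w)=\inf_I\log w$) for the first bullet, and John--Nirenberg fed into the layer-cake expansion, with the convergence condition $1-c_2\mu/\|w\|_{BMO}<0$ giving exactly the stated threshold, followed by the $BLO$ bound $w_I-\inf_I w\le\|w\|_{BLO}$ to pass from $e^{w_I/\mu}$ to $\inf_I e^{w/\mu}$, for the second. The paper itself does not prove this lemma at all -- it simply refers to Torchinsky, p.~157 -- so there is no in-paper proof to match; but it is worth pointing out that your argument for the second bullet is essentially the computation the paper later carries out explicitly in Step 1 of the norm-equivalence theorem, where the specific choice $\mu=4\|f\|_{BLO}/c_2\ge 2\|f\|_{BMO}/c_2$ is used with Cavalieri's principle and the same final $BLO$ step. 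The only cosmetic gap is in the first bullet: to speak of $\|\log w\|_{BLO}$ you should note that $\log w\in L^1_{loc}$, which follows since $\log w\le w\in L^1_{loc}$ above and, by the $A_1$ condition, $\inf_I w>0$ on every compact interval (unless $w\equiv 0$), so $\log w$ is locally bounded below; with that remark your proof is complete.
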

\begin{proof}
	For the proof we refer to \cite{libroneverde},p. 157.
	\end{proof}

Let now introduce another important tool that we will use: \\
\begin{mydef}
\noindent The operator $M: L_{loc}^1 \rightarrow \mathbb{R}$ such that:
\begin{equation*}
Mw(x)=\sup\limits_{I \ni x} \fint_I |w(t)|dt
\end{equation*}
is called Hardy Littlewood maximal operator.
\end{mydef}
\noindent We quote another result Coifman and Rochberg, concerning powers of $Mw$ for $w \in L^1_{loc}(\mathbb{R})$.

\begin{lem}[{\bfseries R.R. Coifman, R. Rochberg,  \cite{coif}}]\label{powersofm}
	Let $w \in L^1_{loc}$ be a weight and $\varepsilon \in [0,1)$. Then:
	\begin{equation*}
	Mw(x)^\varepsilon \in A_1
	\end{equation*}
		with $A_1$ constant depending on $\varepsilon$ but not on $w$.
\end{lem}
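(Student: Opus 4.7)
The strategy is to verify the $A_1$ condition for $(Mw)^\varepsilon$ on an arbitrary interval $I$ by the classical localization trick: split $w = w_1 + w_2$ with $w_1 = w\chi_{2I}$ and $w_2 = w\chi_{\mathbb{R}\setminus 2I}$ (where $2I$ is the interval with the same center as $I$ but twice the length), and use the subadditivity $(a+b)^\varepsilon \le a^\varepsilon + b^\varepsilon$, valid because $\varepsilon < 1$, to separate the near and far contributions to $Mw$.

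For the far piece, I would show that $Mw_2$ is essentially constant on $I$. The point is that any interval $J \ni x$, with $x \in I$, that actually contributes to $Mw_2(x)$ must meet $\mathbb{R}\setminus 2I$ and hence has length bounded below by a constant multiple of $|I|$. Enlarging $J$ to the smallest interval $\tilde{J}$ containing both $J$ and $I$ changes its length only by a bounded factor, so one gets $\fint_J w_2 \le 5\, Mw(y)$ for any $y \in I$, which yields
$$\fint_I (Mw_2)^\varepsilon \,dx \lesssim \inf_{y\in I} Mw(y)^\varepsilon.$$

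The near piece is the delicate one, because $M$ is not bounded on $L^1$: one cannot hope to dominate $\fint_I Mw_1$ by $\fint_{2I} w$ directly. This is where the restriction $\varepsilon < 1$ is essential, since it lets me invoke Kolmogorov's inequality, which from the weak $(1,1)$ bound for $M$ gives
$$\int_I (Mw_1)^\varepsilon \,dx \le C_\varepsilon |I|^{1-\varepsilon} \|w_1\|_1^\varepsilon.$$
Dividing by $|I|$ and observing that $\|w_1\|_1/|I| = 2 \fint_{2I} w \le 2\, Mw(y)$ for every $y \in I$, I obtain $\fint_I (Mw_1)^\varepsilon \,dx \le 2^\varepsilon C_\varepsilon \inf_I (Mw)^\varepsilon$.

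Adding the two estimates produces an $A_1$ constant of the form $2^\varepsilon C_\varepsilon + 5^\varepsilon$, depending on $\varepsilon$ but not on $w$, as required. The main obstacle — and the only place where the condition $\varepsilon \in [0,1)$ is truly used — is precisely the lack of an $L^1$-bound for the maximal operator on the near piece; Kolmogorov's inequality, which trades a fractional power for an integration against Lebesgue measure, is the device that repairs this defect, and it fails at the endpoint $\varepsilon = 1$ (consistently with the well-known fact that $Mw \notin A_1$ in general).
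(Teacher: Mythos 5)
The paper itself offers no proof of this lemma: it is quoted directly from Coifman--Rochberg \cite{coif}, so there is nothing internal to compare your argument against; I can only assess it on its own merits, and it is correct --- indeed it is the standard proof of this result (as in \cite{garcia}). The decomposition $w=w\chi_{2I}+w\chi_{\mathbb{R}\setminus 2I}$ together with the sublinearity of $M$ and the subadditivity of $t\mapsto t^{\varepsilon}$ splits the problem correctly; Kolmogorov's inequality, derived from the weak $(1,1)$ bound for $M$, handles the near part and gives $\fint_I (Mw_1)^{\varepsilon}\,dx\le 2^{\varepsilon}C_{\varepsilon}\inf_I (Mw)^{\varepsilon}$ exactly as you say; and the geometric observation that any interval $J\ni x\in I$ meeting $\mathbb{R}\setminus 2I$ has $|J|\ge |I|/2$, so that the enlarged interval $\tilde J=J\cup I$ satisfies $|\tilde J|\le 3|J|$, shows $Mw_2$ is essentially constant on $I$ (your constant $5$ is a harmless overestimate, $3$ suffices). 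Summing yields the $A_1$ bound with a constant depending only on $\varepsilon$ (blowing up as $\varepsilon\to 1$, consistent with the failure at the endpoint), which is precisely the uniformity the paper needs later when it invokes $A_1([Mg]^{1/2})\le c_6$ with $c_6$ absolute. Two marginal points you should state for completeness: the case $\varepsilon=0$ is trivial since $(Mw)^0\equiv 1$; and the argument tacitly assumes $Mw\not\equiv\infty$ (equivalently $Mw<\infty$ a.e.), without which $(Mw)^{\varepsilon}$ is not a weight --- a caveat the paper's statement also suppresses.
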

\noindent The two authors also proved that, modulo $L^\infty$ functions, all $A_1$ functions arise as some power of $Mw$ for a suitable $w \in L^1_{loc}(\mathbb{R})$, i.e:
\begin{thm}[{\bfseries R.R. Coifman, R. Rochberg,  \cite{coif}}]\label{coifdec}
	Assume $w \in A_1$.\\ There are functions $0<A<b(x)<1 \in L^\infty$, $g \in L^1_{loc}$ and a number $\varepsilon \in [0,1)$ such that:
	\begin{equation*}
	w(x)=b(x)Mg(x)^\varepsilon
	\end{equation*}
\end{thm}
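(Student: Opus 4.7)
The plan is to make the constructive choice $g = w^{1/\varepsilon}$ for a carefully selected exponent $\varepsilon \in (0,1)$ and to define $b(x) = w(x)/Mg(x)^{\varepsilon}$. Then the statement $w(x) = b(x) Mg(x)^{\varepsilon}$ is automatic, and the whole content of the theorem becomes the double-sided estimate $A < b(x) < 1$.

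The upper bound $b(x) \le 1$ is essentially for free: by the Lebesgue differentiation theorem, $Mg(x) \ge g(x) = w(x)^{1/\varepsilon}$ for a.e.\ $x$, so $Mg(x)^{\varepsilon} \ge w(x)$ a.e., giving $b(x) \le 1$. (To land strictly below $1$ as stated, one can multiply by a fixed constant slightly less than one, absorbing the factor into $b$.) The real work is the lower bound, which is where the hypothesis $w \in A_{1}$ must be used in full strength. The key input I would invoke is the \emph{reverse H\"older inequality} for $A_1$ weights: there exists $\delta = \delta(A_{1}(w)) > 0$ and a constant $C = C(A_{1}(w))$ such that
\begin{equation*}
\left( \fint_I w^{1+\delta} \right)^{\frac{1}{1+\delta}} \le C \fint_I w \qquad \text{for every interval } I.
\end{equation*}
Choosing $\varepsilon \in (0,1)$ so that $1/\varepsilon = 1+\delta$, raising both sides to the power $\varepsilon$, and taking the supremum over intervals $I \ni x$ yields
\begin{equation*}
M(w^{1/\varepsilon})(x)^{\varepsilon} \le C^{\varepsilon} \, Mw(x).
\end{equation*}
Finally, since $w \in A_{1}$, the equivalent pointwise characterization $Mw(x) \le A_{1}(w)\, w(x)$ for a.e.\ $x$ closes the loop:
\begin{equation*}
Mg(x)^{\varepsilon} \le C^{\varepsilon} A_{1}(w)\, w(x),
\end{equation*}
so $b(x) = w(x)/Mg(x)^{\varepsilon} \ge 1/\bigl( C^{\varepsilon} A_{1}(w) \bigr) =: A > 0$.

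The main obstacle is verifying that, for the chosen $\varepsilon$, the function $g = w^{1/\varepsilon}$ genuinely lies in $L^{1}_{\mathrm{loc}}$ and that $Mg$ is finite a.e.\ (so that the definition of $b$ makes sense), but both facts follow from the same reverse H\"older self-improvement theorem for $A_1$ weights. Once that is in hand, the proof is essentially bookkeeping. The proposal therefore hinges on a single non-trivial ingredient, the reverse H\"older property of $A_{1}$ weights, which is a classical tool of Muckenhoupt/Coifman--Fefferman theory that can be cited directly from \cite{libroneverde}.
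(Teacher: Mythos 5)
Your proposal is correct and follows essentially the same route as the paper: the same choices $g=w^{1+\delta}=w^{1/\varepsilon}$, $\varepsilon=\frac{1}{1+\delta}$, $b=w/(Mg)^{\varepsilon}$, the upper bound $b\le 1$ from the Lebesgue differentiation theorem, and the lower bound from the reverse H\"older inequality combined with the pointwise $A_1$ property, exactly as in the paper's proof (which phrases the last step as $w^{1+\delta}\in A_1$ with $A_1(w^{1+\delta})$ controlled by $A_1(w)$, rather than your equivalent combination of the averaged reverse H\"older bound with $Mw\le A_1(w)\,w$). The only slip is cosmetic: from $\bigl(\fint_I w^{1/\varepsilon}\bigr)^{\varepsilon}\le C\fint_I w$ one gets $M(w^{1/\varepsilon})^{\varepsilon}\le C\,Mw$ with constant $C$, not $C^{\varepsilon}$, which changes nothing in the conclusion.
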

\noindent For the convenience of the reader we will repeat the proof of this theorem paying particular attention to the value of $A$ and how it depends on $A_1(w)$.\\
To do so, the so called reverse Holder inequality is needed:
\begin{thm} \label{a1}
	If $w \in A_1$, then there is a sufficiently small value of $\eta>0$ and a  constant $c_\eta$ depending on $\eta$ but not on $I \subset \mathbb{R}$ such that:
	\begin{equation*}
	\left(\fint_I w^{1+\eta}(x)dx\right)^{\frac{1}{1+\eta}}\le c_\eta \fint_I w(x)dx.
	\end{equation*} 
	In particular, we have $w^{1+\eta} \in A_1$ and $A_1(w^{1+\eta})\le c_\eta^{1+\eta} A_1(w)^{1+\eta}$.\\
	Furthermore, $c_\eta<c_3:=2e, \ \forall \eta<1$.
\end{thm}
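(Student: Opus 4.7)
The plan is to combine a Calderón--Zygmund stopping-time decomposition with the $A_1$ hypothesis to obtain a self-improving weak-type estimate for $w$, and then to integrate that estimate against $\lambda^{\eta-1}$ and absorb the resulting tail on the left.

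First I would fix an interval $I$ and, for each $\lambda>w_I:=\fint_I w$, perform a one-dimensional dyadic Calderón--Zygmund decomposition of $w$ on $I$ at height $\lambda$. This yields a pairwise disjoint family of subintervals $\{I_j^\lambda\}$ with $\lambda<\fint_{I_j^\lambda} w\le 2\lambda$ (the factor $2$ being the ratio between a dyadic interval and its parent on $\mathbb{R}$) and $w\le\lambda$ a.e.\ on $I\setminus\bigcup_j I_j^\lambda$. The pivotal step is then to feed $w\in A_1$ into each selected interval: since $\inf_{I_j^\lambda} w\ge (\fint_{I_j^\lambda} w)/A_1(w)>\lambda/A_1(w)$, every $I_j^\lambda$ is contained a.e.\ in $\{w>\lambda/A_1(w)\}$. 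Summing $\int_{I_j^\lambda} w\le 2\lambda\,|I_j^\lambda|$ over $j$ then produces the weak-type bound
\[
\int_{\{w>\lambda\}\cap I}w\,dx\;\le\;2\lambda\,\bigl|\{w>\lambda/A_1(w)\}\cap I\bigr|,\qquad \lambda>w_I.
\]

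Next I would use the layer-cake identity $\int_I w^{1+\eta}\,dx=\eta\int_0^\infty\lambda^{\eta-1}\int_{\{w>\lambda\}\cap I}w\,dx\,d\lambda$, split the outer integral at $w_I$, bound the low range trivially by $|I|\,w_I^{1+\eta}$, and in the high range insert the weak-type estimate and change variables $\mu=\lambda/A_1(w)$. Extending the $\mu$-integration back to $[0,\infty)$ produces a contribution of exactly $\tfrac{2\eta\,A_1(w)^{1+\eta}}{1+\eta}\int_I w^{1+\eta}$, which can be absorbed into the left provided $\eta$ is chosen so that $\tfrac{2\eta\,A_1(w)^{1+\eta}}{1+\eta}<1$. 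Dividing by $|I|$ then yields
\[
\fint_I w^{1+\eta}\;\le\;c_\eta^{1+\eta}\Bigl(\fint_I w\Bigr)^{1+\eta},\qquad c_\eta^{1+\eta}=\Bigl(1-\tfrac{2\eta\,A_1(w)^{1+\eta}}{1+\eta}\Bigr)^{-1},
\]
and taking the $(1+\eta)$-th root is the announced reverse Hölder inequality. The complementary claim $w^{1+\eta}\in A_1$ is then immediate: applying the inequality on any $J$ together with $\fint_J w\le A_1(w)\inf_J w$ and the monotonicity of $t\mapsto t^{1+\eta}$ gives $\fint_J w^{1+\eta}\le c_\eta^{1+\eta}A_1(w)^{1+\eta}\inf_J w^{1+\eta}$, whence $A_1(w^{1+\eta})\le c_\eta^{1+\eta}A_1(w)^{1+\eta}$.

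The main obstacle will be the quantitative tail assertion $c_\eta<2e$ for all $\eta<1$: the explicit formula for $c_\eta^{1+\eta}$ above blows up as $\eta$ approaches the critical value $\bigl(2A_1(w)^{1+\eta}-1\bigr)^{-1}$, so the universal bound $c_\eta<2e$ is not automatic and will require a careful tuning of $\eta$ in terms of $A_1(w)$ together with tight tracking of the constants arising from the Calderón--Zygmund selection (especially the factor $2$ in $\fint_{I_j^\lambda}w\le 2\lambda$) and from the absorption step.
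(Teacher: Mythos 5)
You should first be aware that the paper itself contains no proof of Theorem \ref{a1}: it is quoted as the classical reverse H\"older inequality for $A_1$ weights (with the surrounding material referred to \cite{coif}, \cite{garcia}, \cite{libroneverde}), so there is no internal argument to compare yours against. Your Calder\'on--Zygmund route is the standard textbook proof, and for the main inequality and for the bound $A_1(w^{1+\eta})\le c_\eta^{1+\eta}A_1(w)^{1+\eta}$ it is essentially correct: the stopping-time selection with $\lambda<\fint_{I_j^\lambda}w\le 2\lambda$, the use of $\inf_{I_j^\lambda}w\ge \lambda/A_1(w)$ to get $\int_{\{w>\lambda\}\cap I}w\,dx\le 2\lambda\,|\{w>\lambda/A_1(w)\}\cap I|$, the layer-cake identity and the splitting at $w_I$ are all sound. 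One genuine (if standard) gap: the absorption step silently assumes $\int_I w^{1+\eta}<\infty$, which is exactly what you are trying to prove; you must first run the whole argument on the truncations $w_N=\min(w,N)$, which satisfy $A_1(w_N)\le A_1(w)$, obtain the inequality with a constant independent of $N$, and then let $N\to\infty$ by monotone convergence. Without this, ``absorbing the tail on the left'' is not legitimate.

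Concerning the clause $c_\eta<2e$ for all $\eta<1$, which you single out as the main obstacle: read literally it cannot be proved by any argument, since for $\eta$ close to $1$ the quantity $\fint_I w^{1+\eta}$ may be infinite for a general $A_1$ weight, so no universal constant can work there; the statement only makes sense for the ``sufficiently small $\eta$'' produced by the theorem, and that is also the only form the paper uses later (it needs one $\eta\in(0,1)$ with $A_1(w^{1+\eta})\le c_3^{1+\eta}A_1(w)^{1+\eta}$). Under that reading your own explicit constant already settles it, with no delicate tuning: since $A_1(w)\ge 1$, choosing for instance $\eta=\frac{1}{4A_1(w)^2}$ gives $\frac{2\eta A_1(w)^{1+\eta}}{1+\eta}\le\frac12$, hence $c_\eta^{1+\eta}\le 2$ and $c_\eta\le 2<2e$. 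So the obstacle you flag is not an obstacle for the statement in the form in which it is actually needed; the real item to repair in your write-up is the truncation/a priori finiteness point above.
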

Thank to this result we obtain that for every $w \in A_1$ there is always a $\eta \in (0,1)$ such that $A_1(w^{1+\eta})\le c_3^{1+\eta}A_1(w)^{1+\eta}$.\\
\begin{proof}[Proof of Theorem \ref{coifdec}]
	Choose a sufficiently small $\eta<1$ as in the reverse Holder inequality and define
	\begin{equation}
	b(x)=\frac{w(x)}{[M(w^{1+\eta}(x))]^{\frac{1}{1+\eta}}} \text{ and } g(x)=w(x)^{1+\eta},
	\end{equation}
	noticing that $g(x)\in L^1_{loc}(\mathbb{R})$.\\
	We have that $b(x)\le 1 \iff w^{1+\eta}(x) \le M(w^{1+\eta}(x))$ but this latter inequality holds a.e.since almost every point $x \in \mathbb{R}$ is a Lebesgue point for $w^{1+\eta}(x)\in L^1_{loc}(\mathbb{R})$.\\
	A lower bound on $b(x)$ can be obtained by observing that:
	\begin{equation}
		b(x)=\frac{w(x)}{[M(w^{1+\eta}(x))]^{\frac{1}{1+\eta}}}=\left[\frac{w(x)^{1+\eta}}{M(w^{1+\eta}(x))}\right]^{\frac{1}{1+\eta}}\ge \left[\frac{1}{A_1(w^{1+\eta})}\right]^{\frac{1}{1+\eta}}\ge \frac{1}{c_3A_1(w)}>0.
	\end{equation}
	Now let us take $\varepsilon \in (0,1)$ to be $\frac{1}{1+\eta}$.\\
	We have that $w(x)=b(x)\left[Mg(x)\right]^{\varepsilon}$ so that the theorem is proven and we can choose A as $\frac{1}{c_3A_1(w)}$.
	\end{proof}
By combining theorems \ref{powersofm} and \ref{coifdec}, in light of the duality between $BLO$ and $A_1$ expressed by lemma \ref{dualita} we get that:
\begin{equation} \label{decompo}
f(x) \in BLO \iff \exists\  \alpha>0,\ b(x) \in L^\infty,\ g(x) \in L^1_{loc}, \quad f(x)=\alpha\log(Mg(x))+b(x)
\end{equation}
As a matter of fact, if we have $f \in BLO$, then by lemma \ref{dualita} there is a $\mu>0$ such that $e^{f/\mu}\in A_1$ can be decomposed as in Theorem \ref{coifdec} and the conclusion follows by taking logarithms of both sides.\\ The inverse implication follows from the fact that $(Mg)^\frac{1}{2} \in A_1$ by lemma \ref{powersofm}, which in turn implies $\alpha\log(Mg(x))=2\alpha\log((Mg)^\frac{1}{2})$ is in $BLO$ by lemma \ref{dualita}. The obvious inclusion $L^\infty \subset BLO$ completes this line of reasoning.\\
The logical equivalence in (\ref{decompo}) leads us to define:
\begin{equation} \label{dog}
\|f\|_{BLO}'=\inf\{\alpha+\|b\|_{\infty}:\ \alpha>0,\ b\in L^\infty \ \text{ s.t. }  \exists g \in L^1_{loc}, \ f=\alpha\log(Mg)+b\}
\end{equation}
which has, of course, the same properties of $\|\cdot\|_{BLO}$. In the next section we prove the equivalence of the two.
\section{Equivalence of the norms}
\begin{thm}
	There exist two absolute constants $d_1,d_2>0$ such that, for every $f \in BLO$:
	\begin{equation}
	d_1\|f\|_{BLO}\le  \|f\|_{BLO}' \le d_2\|f\|_{BLO}
	\end{equation}
where $||f||_{BLO}'$ has been defined in \eqref{newnorm}.
\begin{proof} During this proof, $c_1$ and $c_2$ will still denote the constants in equation (\ref{JN}) of lemma \ref{JNLEMMA}.\\We will divide the proof in steps, addressing the latter inequality first.\\
\textbf{Step 1: Showing that $\exists c_4>0$ such that $\forall f \in BLO, \ A_1\left(e^{\frac{fc_2}{4\|f\|_{BLO}}}\right)\le c_4$:}\\
Let us take $\lambda=\frac{4\|f\|_{BLO}}{c_2}\log(\zeta)$ in John-Niremberg inequality (\ref{JN}) from lemma \ref{JNLEMMA}.\\
We have:
	\begin{equation} \label{JNmod}
\Bigg|\, \Bigg\{t \in I:\ |f(t)-f_I|>\frac{4\|f\|_{BLO}}{c_2}\log(\zeta) \Bigg\} \,\Bigg|\le c_1\zeta^{-\frac{4\|f\|_{BLO}}{\|f\|_{BMO}}}|I|.
\end{equation}
Notice that:
\begin{equation} \label{JNmod2}
|f(t)-f_I|>\frac{4\|f\|_{BLO}}{c_2}\log(\zeta) \iff e^{\frac{c_2|f(t)-f_I|}{4\|f\|_{BLO}}}>\zeta.
\end{equation}
By Cavalieri's principle we have:
\begin{multline} \label{ultima}
\int_I e^{\frac{c_2|f(t)-f_I|}{4\|f\|_{BLO}}}dt =\int_0^\infty |\{t \in I:\ e^{\frac{c_2|f(t)-f_I|}{4\|f\|_{BLO}}}>\zeta\}|d\zeta \le\\ \le |I|+\int_1^\infty |\{t \in I:\ e^{\frac{c_2|f(t)-f_I|}{4\|f\|_{BLO}}}>\zeta\}|d\zeta.
\end{multline}
Thank to (\ref{JNmod2}) we are able to bound the last integral so that (\ref{ultima}) becomes:
\begin{equation} \label{ultima2}
\int_I e^{\frac{c_2|f(t)-f_I|}{4\|f\|_{BLO}}} dt \le |I|+|I|\int_1^\infty c_1\zeta^{-\frac{4\|f\|_{BLO}}{\|f\|_{BMO}}} \le |I|+|I|\int_1^\infty c_1\zeta^{-2}d\zeta =|I|(1+c_1).
\end{equation}
where $\|\cdot\|_{BMO}\le 2 \|\cdot\|_{BLO}$ was also used.\\
By dividing both sides of (\ref{ultima2}) by $|I|$ and using $x\le |x|$, we get to
\begin{equation}
\fint_I e^{\frac{c_2(f(t)-f_I)}{4\|f\|_{BLO}}} dt \le 1+c_1
\end{equation} so that:
\begin{equation}
\fint_I e^{\frac{c_2}{4\|f\|_{BLO}}f(t)}dt \le (1+c_1)e^{\frac{c_2f_I}{4\|f\|_{BLO}}}\le (1+c_1)e^{\frac{c_2(\inf_If+\|f\|_{BLO})}{4\|f\|_{BLO}}}
\end{equation}
where we used $f \in BLO$. Lastly:
\begin{equation}
\fint_I e^{\frac{c_2}{4\|f\|_{BLO}}f(t)}dt \le (1+c_1)e^{\frac{c_2}{4}}\inf_Ie^{\frac{c_2}{4\|f\|_{BLO}}f(t)}
\end{equation}
and this first step is concluded by taking $c_4=(1+c_1)e^{\frac{c_2}{4}}$.\\
\textbf{Step 2: Showing that $\|f\|_{BLO}' \le d_2\|f\|_{BLO}$}
Take $f(x) \in BLO$. By the first step we have $w(x)=e^{\frac{f(x)c_2}{4\|f\|_{BLO}}}\in A_1$.\\
By theorem \ref{coifdec} there exist $b \in L^\infty$, $0<A<b(x)<1$, $\varepsilon \in [0,1)$, $g \in L^1_{loc}$ such that
\begin{equation}
w(x)=b(x)[Mg(x)]^\varepsilon
\end{equation}
and so
\begin{equation}
f(x)=\frac{4\|f\|_{BLO}}{c_2}\log(b(x))+\frac{4\|f\|_{BLO}}{c_2}\varepsilon\log(Mg(x)).
\end{equation}
In particular, define 
\begin{center}$B(x)=\frac{4\|f\|_{BLO}}{c_2}\log(b(x))$ and $\alpha=\frac{4\|f\|_{BLO}}{c_2}\varepsilon$.
\end{center}
Since $1\ge b(x) \ge \frac{1}{c_3A_1(w)}$, taking logarithms and using the first step we have that
\begin{equation} \label{ecola} 0<|B(x)|<[\log(c_3)+\log(c_4)]\frac{4\|f\|_{BLO}}{c_2}=:c_5\frac{4\|f\|_{BLO}}{c_2}.\end{equation}
Also notice $\alpha<\frac{4\|f\|_{BLO}}{c_2}$ since $\varepsilon<1$.\\
By combining this with (\ref{ecola}) we get $\|f\|_{BLO}'\le \alpha+\|B\|_{\infty}\le \left[(1+c_5)\frac{4}{c_2}\right]\|f\|_{BLO}$, so that by denoting $d_2=\left[(1+c_5)\frac{4}{c_2}\right]$ we conclude the second step. \\
\textbf{Step 3: Showing $d_1\|f\|_{BLO}\le  \|f\|_{BLO}'$.}\\
Let us choose any $\alpha>0$, $g \in L^1_{loc}$ and $b\in L^\infty$ such that $f=\alpha\log(Mg)+b$.\\
We have:
\begin{equation*}
\|f\|_{BLO}\le \alpha\|\log(Mg)\|_{BLO}+\|b\|_{BLO}\le 2\alpha\|\log([Mg]^{\frac{1}{2}})\|_{BLO}+2\|b\|_{\infty}.
\end{equation*} 
Thank to the first proposition in Lemma \ref{dualita} we have:
\begin{equation*}
\|f\|_{BLO}\le 2\alpha \log\left[A_1([Mg]^{\frac{1}{2}})\right]+2\|b\|_{\infty}.
\end{equation*}
By Theorem \ref{powersofm} we have that $A_1([Mg]^\varepsilon)$ does not depend on $g$, so that $A_1([Mg]^\frac{1}{2})\le c_6$, with $c_6$ absolute constant.\\
For this reason we have
\begin{equation}
\|f\|_{BLO}\le 2\alpha\log(c_6)+2\|b\|_{\infty},
\end{equation}
for each possible decomposition of $f$, so that by taking the infimum and denoting $\frac{1}{d_1}=\max\{2,2\log(c_6)\}$ we conclude the last step of the proof.
\end{proof}
\end{thm}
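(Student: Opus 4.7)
The plan is to establish the two inequalities separately, exploiting the characterization \eqref{decompo} together with the quantitative duality between $A_1$ and $BLO$ supplied by Lemma \ref{dualita} and the explicit decomposition from Theorem \ref{coifdec}. The easier inequality is $d_1\|f\|_{BLO}\le \|f\|_{BLO}'$, so I would dispatch it first.

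For that lower bound, given any admissible decomposition $f=\alpha\log(Mg)+b$, subadditivity of $\|\cdot\|_{BLO}$ gives $\|f\|_{BLO}\le \alpha\|\log(Mg)\|_{BLO}+\|b\|_{BLO}$. The term with $b$ is controlled by $2\|b\|_\infty$ thanks to \eqref{raddoppionorme}. For the other term, Lemma \ref{powersofm} provides an absolute bound $A_1((Mg)^{1/2})\le c_6$, and then the first bullet of Lemma \ref{dualita} yields $\|\log(Mg)^{1/2}\|_{BLO}\le \log c_6$; doubling gives an absolute bound on $\|\log Mg\|_{BLO}$. Taking the infimum over all decompositions produces a constant $d_1^{-1}$ of the form $\max\{2,2\log c_6\}$.

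For the harder inequality $\|f\|_{BLO}'\le d_2\|f\|_{BLO}$, the strategy is to find a single scale $\mu\asymp\|f\|_{BLO}$ for which $w:=e^{f/\mu}$ not only lies in $A_1$ but has $A_1$ constant bounded by an absolute constant. To obtain this quantitative $A_1$ control I would start from John--Nirenberg \eqref{JN}, substitute $\lambda=\tfrac{4\|f\|_{BLO}}{c_2}\log\zeta$, and integrate the resulting tail bound via Cavalieri's principle to get exponential integrability of $f-f_I$ over $I$. A small enough exponent (the factor $4$ in the denominator, combined with $\|f\|_{BMO}\le 2\|f\|_{BLO}$) guarantees an integrable power $\zeta^{-2}$ in the tail. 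The passage from $f_I$ to $\inf_I f$ uses the very definition of $BLO$ and costs only a harmless multiplicative factor $e^{c_2/4}$, so $A_1(w)$ is bounded by an absolute constant $c_4$.

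Once this is in hand, I would apply Theorem \ref{coifdec} to $w$ with the explicit lower bound $A\ge 1/(c_3 A_1(w))$, writing $w=b(x)(Mg)^\varepsilon$. Taking logarithms and multiplying by $\mu=4\|f\|_{BLO}/c_2$ produces the admissible decomposition $f=\mu\log b+\mu\varepsilon\log(Mg)$ in the sense of \eqref{dog}; the $L^\infty$ part is bounded by $\mu(\log c_3+\log c_4)$, and the coefficient $\mu\varepsilon$ is dominated by $\mu$ since $\varepsilon<1$. Summing and collecting all absolute constants yields $d_2$. The main obstacle is the quantitative Step 1: getting an absolute (not $f$-dependent) bound on $A_1(e^{fc_2/(4\|f\|_{BLO})})$, because every subsequent estimate — the size of $A$ in the Coifman--Rochberg decomposition and hence the $L^\infty$ part $\mu\log b$ — is propagated from this single constant.
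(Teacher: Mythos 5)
Your proposal is correct and follows essentially the same route as the paper: the lower bound via subadditivity, Lemma \ref{powersofm} and the first bullet of Lemma \ref{dualita} with $d_1^{-1}=\max\{2,2\log c_6\}$, and the upper bound via a quantitative John--Nirenberg/Cavalieri estimate giving an absolute bound on $A_1\bigl(e^{fc_2/(4\|f\|_{BLO})}\bigr)$, followed by the Coifman--Rochberg decomposition with the explicit lower bound on $b(x)$. No substantive differences to report.
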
\noindent
\begin{thm}
	For every $f \in BLO$ define:
	\begin{equation*}
	\sigma(f)=\inf\Bigg\{\mu>0: \ e^{\frac{f}{\mu}}\in A_1\Bigg\}
	\end{equation*}
	we have that:
	\begin{equation*}
	dist_{BLO'}(f,L^\infty)=\inf\limits_{h \in L^\infty}\|f-h\|_{BLO}'=\sigma(f)
	\end{equation*}
	\begin{proof}
	First notice that:
	\begin{equation*}
	dist_{BLO'}(f,L^\infty)=\inf\limits_{h \in L^\infty}\inf\{\mu+\|b\|_{\infty}: \ f-h=\mu\log(Mg)+b\}
	\end{equation*}
	so that, for any $\mu$ and $g$, by a proper choice of $h$, $b$ can always be chosen to be $0$.\\
	This means that
	\begin{equation} \label{gat}
	dist_{BLO'}(f,L^\infty)=\inf\{\mu>0:\ \exists g(x)\in L^1_{loc}, \ f(x)-\mu\log[Mg(x)]\in L^\infty\}.
	\end{equation}
	On the other hand, since a weight $e^{\frac{f}{\mu}} \in L^1_{loc}$ is in the Muckenhoupt class $A_1$ if and only if it can be written as $b(x)[Mg(x)]^\varepsilon$ for suitable $0<A<b(x) \in L^\infty$, $g \in L^1_{loc}$ and $\varepsilon \in [0,1)$, we have:
	\begin{equation}
	\sigma(f)=\inf\{\mu>0: \ \exists b \in L^\infty,\ \varepsilon<1,\ g \in L^1_{loc}, \ e^{\frac{f(x)}{\mu}}=b(x)[Mg(x)]^\varepsilon\}
	\end{equation}
	or, by taking logarithms:
	\begin{equation}
	\sigma(f)=\inf\Bigg\{\mu>0: \ \exists b(x) \in L^\infty,\ \varepsilon<1,\ g(x) \in L^1_{loc}, \ \frac{f(x)}{\mu}=b'(x)+\varepsilon\log[Mg(x)]\Bigg\},
	\end{equation}
	where we have denoted $b'(x)=\log(b(x)) \in L^\infty$.\\
	Equivalently:
	\begin{equation}\label{foca}
	\sigma(f)=\inf\{\mu>0: \ \exists \varepsilon<1,\ g(x) \in L^1_{loc}, \ f(x)-\mu\varepsilon\log[Mg(x)]\in L^\infty\}
	\end{equation}
	 is the same as:
	\begin{equation}\label{foca2}
	\sigma(f)=\inf\{\mu>0: \ \exists g(x) \in L^1_{loc}, \ f(x)-\mu\log[Mg(x)]\in L^\infty\}.
	\end{equation}
	Let us better explain this last logical step, between equation (\ref{foca}) and (\ref{foca2}).\\ A real number $a$ satisfies \begin{equation}
	a<\inf\{\mu>0: \ \exists \varepsilon<1,\ g(x) \in L^1_{loc}, \ f(x)-\mu\varepsilon\log[Mg(x)]\in L^\infty\}
	\end{equation}
	if and only if there is no $\varepsilon<1$ and $g \in L^1_{loc}$ such that $f-a\varepsilon\log(Mg(x)) \in L^\infty$, which is equivalent to saying that:
	\begin{equation}
	\nexists a'<a: \ f-a'\log(Mg(x))\in L^\infty
	\end{equation}
	which happens if and only if:
	\begin{equation*}
	a\le \inf\{\mu>0: \ \exists g(x) \in L^1_{loc}, \ f(x)-\mu\log[Mg(x)]\in L^\infty\}.
	\end{equation*}
	The conclusion follows by comparing (\ref{gat}) and (\ref{foca2}).\\
	\end{proof}
\end{thm}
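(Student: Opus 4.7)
The plan is to rewrite both sides of the claimed equality as infima of the \emph{same} type of condition on $\mu$, namely whether $f$ admits a decomposition of the form ``$\mu$ times $\log Mg$ plus an $L^\infty$ remainder'' (possibly with a further damping factor $\varepsilon < 1$), and then to match these two infima.

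First I would simplify the left-hand side. By the definition \eqref{dog},
\begin{equation*}
dist_{BLO'}(f,L^\infty)=\inf_{h\in L^\infty}\inf\bigl\{\alpha+\|b\|_\infty:\ \alpha>0,\ b\in L^\infty,\ f-h=\alpha\log(Mg)+b\text{ for some }g\in L^1_{loc}\bigr\}.
\end{equation*}
Since $h\in L^\infty$ is free, I can absorb the $L^\infty$ part $b$ into $h$ (replace $h$ by $h+b$), so the bounded remainder can be taken to vanish and the inner infimum collapses to $\alpha$. This gives
\begin{equation*}
dist_{BLO'}(f,L^\infty)=\inf\bigl\{\mu>0:\ \exists\,g\in L^1_{loc},\ f-\mu\log(Mg)\in L^\infty\bigr\}.
\end{equation*}

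Next I would reformulate $\sigma(f)$ via the Coifman--Rochberg decomposition. By Theorem \ref{coifdec}, a weight $w=e^{f/\mu}$ belongs to $A_1$ iff it can be written as $b(x)[Mg(x)]^\varepsilon$ with $0<A<b<1$, $\varepsilon\in[0,1)$, $g\in L^1_{loc}$; the converse direction (that such products are automatically in $A_1$) is guaranteed by Lemma \ref{powersofm} together with the fact that a bounded-above-and-below factor preserves the $A_1$ property. Taking logarithms translates this to the condition $f-\mu\varepsilon\log(Mg)\in L^\infty$, so
\begin{equation*}
\sigma(f)=\inf\bigl\{\mu>0:\ \exists\,\varepsilon\in[0,1),\ g\in L^1_{loc},\ f-\mu\varepsilon\log(Mg)\in L^\infty\bigr\}.
\end{equation*}

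The main obstacle is the final comparison, because in the reformulation of $\sigma(f)$ the coefficient in front of $\log(Mg)$ is $\mu\varepsilon$ with $\varepsilon$ \emph{strictly} less than $1$, whereas for $dist_{BLO'}$ the coefficient is $\mu$ itself. One direction is immediate: if $f-\mu\varepsilon\log(Mg)\in L^\infty$ for some $\varepsilon<1$, then $\mu\varepsilon$ witnesses that $dist_{BLO'}(f,L^\infty)\le\mu\varepsilon<\mu$, hence $dist_{BLO'}(f,L^\infty)\le\sigma(f)$. For the reverse, given any $\mu>dist_{BLO'}(f,L^\infty)$ admitting $g$ with $f-\mu\log(Mg)\in L^\infty$, I would ``open up'' a small gap by choosing an arbitrary $\mu'>\mu$ and setting $\varepsilon:=\mu/\mu'\in(0,1)$; then $\mu'\varepsilon=\mu$, so the same $g$ shows $\mu'\in$ the $\sigma$-set, giving $\sigma(f)\le\mu'$. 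Letting $\mu'\searrow\mu$ and then $\mu\searrow dist_{BLO'}(f,L^\infty)$ yields $\sigma(f)\le dist_{BLO'}(f,L^\infty)$ and concludes the proof.
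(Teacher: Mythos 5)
Your proposal is correct and follows essentially the same route as the paper: absorb the bounded part $b$ into the $L^\infty$ competitor $h$ to reduce $dist_{BLO'}(f,L^\infty)$ to $\inf\{\mu>0:\ \exists g,\ f-\mu\log(Mg)\in L^\infty\}$, then use the Coifman--Rochberg decomposition together with Lemma \ref{powersofm} to rewrite $\sigma(f)$ with the extra factor $\varepsilon<1$, and finally identify the two infima. Your closing scaling argument (taking $\varepsilon=\mu/\mu'$ and letting $\mu'\searrow\mu$) is just a cleaner presentation of the same comparison the paper makes between \eqref{foca} and \eqref{foca2}, so no further changes are needed.
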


\subsection{Norm attaining property}
In this section, we will show that our new norm has the norm attaining property on $BLO$, i.e. this norm is attained on the closed unit balls of $BLO$. \\
The classical norm on $BLO$ instead has got norm attaining property only on function in $VLO$. Actually, there are examples of $BLO$ functions for whom the norm attaining property does not hold (see \cite{giak}).\\
\begin{mydef}
Let $\Omega \subset \mathbb{R}$. We say that a measurable function $f$ belongs to $EXP(\Omega)$ if there exists a number $\mu \in [0,\infty)$ such that \begin{equation}
	\int_{\Omega} e^{\frac{f}{\mu}} < \infty .
	\end{equation}
	\end{mydef}
\begin{thm} \label{minimum}
		If $f \in BLO$, then $\sigma(f)$ is a minimum, i.e. \\
	$$	\sigma(f)=\min\{\mu>0:\ e^{\frac{f}{\mu}} \in A_1 \} $$ 
\end{thm}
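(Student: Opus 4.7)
The plan is to prove that the minimum is attained by taking a sequence $\mu_n \downarrow \sigma := \sigma(f)$ with $e^{f/\mu_n} \in A_1$ and showing that the limit weight $e^{f/\sigma}$ still lies in $A_1$. The first step is convergence of local means. Splitting any compact interval $I$ as $I = I^+ \sqcup I^-$ with $I^\pm = I \cap \{\pm f \ge 0\}$, the family $e^{f/\mu_n}$ is monotone increasing to $e^{f/\sigma}$ on $I^+$, while on $I^-$ it is bounded by $1$ and converges pointwise to $e^{f/\sigma}$; monotone and dominated convergence together then give $\fint_I e^{f/\mu_n} \to \fint_I e^{f/\sigma}$ for every $I$.

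The second ingredient is a power-law monotonicity of the $A_1$ constants. Applying Jensen's inequality to $e^{f/\mu'} = (e^{f/\mu})^{\mu/\mu'}$ (with $\mu' > \mu$), using the concavity of $x \mapsto x^{\mu/\mu'}$, yields $A_1(e^{f/\mu'}) \le A_1(e^{f/\mu})^{\mu/\mu'}$, so that $\mu \mapsto \mu \log A_1(e^{f/\mu})$ is non-increasing. Coupled with the pointwise $A_1$ inequality $\fint_I e^{f/\mu_n} \le A_1(e^{f/\mu_n}) \cdot e^{(\inf_I f)/\mu_n}$, passing $n \to \infty$ should produce $\fint_I e^{f/\sigma} \le C \cdot e^{(\inf_I f)/\sigma}$ uniformly in $I$. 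In parallel, Theorem~\ref{coifdec} lets me write $e^{f/\mu_n} = b_n (Mg_n)^{\varepsilon_n}$ with $\varepsilon_n \in [0,1)$ and $b_n$ trapped between an explicit positive lower bound (in terms of $A_1(e^{f/\mu_n})$) and $1$; rewriting as $f = \mu_n \log b_n + \mu_n\varepsilon_n \log(Mg_n)$, a compactness argument on the triples $(b_n,\varepsilon_n,g_n)$ is intended to extract a limiting decomposition $f = \sigma \log(Mg) + b$ with $g \in L^1_{loc}$ and $b \in L^\infty$, which would immediately give $e^{f/\sigma} \in A_1$.

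The main obstacle I expect is securing a uniform-in-$n$ upper bound on $A_1(e^{f/\mu_n})$ as $\mu_n \downarrow \sigma$: the monotonicity just mentioned only shows that $\mu_n \log A_1(e^{f/\mu_n})$ increases, not that it stays bounded, so the Coifman--Rochberg parameters $\varepsilon_n$ may genuinely approach $1$ and the lower bounds on $b_n$ may degenerate (indeed, the reverse Hölder exponent $\eta_n$ furnished by Theorem~\ref{a1} is forced to $0$ in this regime). Overcoming this, via a careful compactness or normalization argument combined with a quantitative use of the reverse Hölder inequality, is what I expect to be the core of the proof.
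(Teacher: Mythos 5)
Your plan is not a complete proof, and the obstacle you yourself flag at the end is not a removable technicality but the decisive obstruction. The first step (monotone plus dominated convergence of the local means on $I^{+}$ and $I^{-}$) is fine, but everything after it funnels through a uniform-in-$n$ bound $A_1(e^{f/\mu_n})\le C$ as $\mu_n\downarrow\sigma$: without it, passing to the limit in $\fint_I e^{f/\mu_n}\le A_1(e^{f/\mu_n})\,e^{(\inf_I f)/\mu_n}$ yields nothing, and the ``compactness argument'' on the Coifman--Rochberg triples $(b_n,\varepsilon_n,g_n)$ from Theorem \ref{coifdec} is unsubstantiated (no topology in which the $g_n$ converge is specified, the lower bound on $b_n$ is of the form $1/(c_3A_1(e^{f/\mu_n}))$ and so degenerates, and $\varepsilon_n$ may tend to $1$). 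As you observe, your Jensen monotonicity runs in the wrong direction: it shows $\mu\log A_1(e^{f/\mu})$ is non-increasing in $\mu$, hence increasing along $\mu_n\downarrow\sigma$, so it can never supply the needed bound.

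Worse, that missing bound cannot be secured, because the limiting weight is in general not in $A_1$ at all: by the reverse H\"older self-improvement of Theorem \ref{a1}, if $e^{f/\sigma}\in A_1$ then $e^{(1+\eta)f/\sigma}=e^{f/(\sigma/(1+\eta))}\in A_1$ for some $\eta>0$, which is incompatible with $\sigma$ being the infimum unless $\sigma=0$ (and for $\mu>0$ the value $0$ is not attained either). Concretely, for $f(x)=-\log|x|\in BLO$ one has $e^{f/\mu}=|x|^{-1/\mu}\in A_1$ exactly when $\mu>1$, while $e^{f}=|x|^{-1}$ is not even locally integrable; along any $\mu_n\downarrow 1$ the constants $A_1(e^{f/\mu_n})$ necessarily blow up, which is precisely the degeneration you anticipated. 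So the strategy ``take $\mu_n\downarrow\sigma$ and show the limit weight stays in $A_1$'' cannot be completed, and this tension in fact cuts against the statement itself as formulated. Note also that the paper's own argument is of a completely different nature: it attempts to reduce the question to a fixed constant $C$ and then runs an elementary one-parameter continuity/monotonicity/intermediate-value argument for $\lambda\mapsto\fint e^{\lambda h}$ with $g=e^{h}$, rather than any limiting argument on $A_1$ constants; it does not confront the reverse H\"older obstruction that blocks your route.
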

\begin{proof}
Since $f$ belongs to $BLO$, we have that $e^{\frac{f}{\mu}} \in A_1$, and so: \\
$$\sup_{I \subset \subset \mathbb{R}} \frac{\fint_I e^{\frac{f}{\mu}}  dx}{\inf_I e^{\frac{f}{\mu}}} < \infty$$\\
i.e. there exists a constant $C$ such that:$$\sup_{I \subset \subset \mathbb{R}} {\fint_I e^{f-\inf_I f}dx} \le C.$$  \\
	So, to obtain the thesis, we just have to prove that: \\
	\begin{equation} \label{min}
	\inf \Bigg\{ \mu > 0 : \fint e^{\frac{h}{\mu}} \le C\Bigg\}, 
	\end{equation}	
 is a minimum, where $h$ is a  $L^{\infty}$ function. \\
		Throughout this proof we will use $\lambda = 1/\mu$ instead of $\mu$.
	Let us consider $g=e^h$. \\
    Let us suppose that $g$ is not almost everywhere equal to $1$ and
	define $M = \{\sup{\lambda > 0 : g \in L^{\lambda} } \}$.\\
	Let us first suppose that $M < + \infty $. \\
	It is well known that $||g||_{\lambda}$ and hence $||g||_{\lambda}^{\lambda}$ is continuous	with respect to $\lambda$ in $[0,M).$ \\
	We also notice that it is strictly increasing because $g$ is not almost everywhere equal to $1$. \\
	So, there are two possibilities: either $||g||_M^M$ is at most $C$ (and $\frac{1}{M}$ is the minimum we were looking for in the theorem) or $||g||_M^M 
	\in (C,+ \infty]$ but there is unique $M' < M$ such that $||g||_{M'}^{M'}=C$, by the definition of limit of $||g||_{\lambda}^{\lambda} \to ||g||_M^M$ as $\lambda$ goes to $M$,
	monotonicity and the intermediate value theorem.\\
	In either case, the infimum \eqref{min} is attained. \\
	Now let us consider the case that $M = + \infty$. \\
	Of course $\lim_{\lambda \to +\infty} ||g||_{\lambda } = ||g||_{\infty} > 1$
(it is possible that $||g||_{\infty}$ is infinite but that is not a problem) because $h$ is not almost everywhere equal to $0$ but this means that $\lim_{\lambda \to \infty} ||g||_{\lambda}^{\lambda}$. So there
	exists $M' < \infty$ such that  $||g||_{M'}^{M'}=C$, again by definition of limit, monotonicity and intermediate value theorem.  \\
If $g$ is almost everywhere equal to $1$, then $h=0$ a.e., so the minimum in \eqref{min} is attained for every $\mu > 0$.

\end{proof}

\begin{cor} \label{expo}
	If $h \in EXP$, then $\sigma(h)$ is a minimum, i.e. \\
	The quantity \begin{equation}\label{exp}	\inf\Bigg\{\mu>0: \fint_{[a,b]} e^{\frac{h}{\mu}} \le C \Bigg \} 
	\end{equation} is a minimum.
\end{cor}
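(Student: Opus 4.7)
The plan is to observe that the proof of Theorem~\ref{minimum}, from equation~\eqref{min} onward, never really uses the full $BLO$ hypothesis: it only uses the existence of \emph{some} $\mu > 0$ for which $\fint_{[a,b]} e^{h/\mu}$ is finite, together with the continuity and monotonicity of $\lambda \mapsto \|g\|_\lambda^\lambda$ for $g = e^h$. The hypothesis $h \in EXP$ is precisely what supplies that initial finiteness, so the very same argument carries over. Thus I would structure the proof by stating that Theorem~\ref{minimum} already proves this corollary once one verifies the starting point, and then making that verification explicit.

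Concretely, I would set $g = e^h$ on $[a,b]$ and immediately dispose of the trivial case $g \equiv 1$ a.e., in which \eqref{exp} is attained at every $\mu > 0$. Otherwise, by the definition of $EXP$, there exists $\mu_0 > 0$ with $\int_{[a,b]} e^{h/\mu_0} < \infty$, so that $g \in L^{\lambda_0}([a,b])$ for $\lambda_0 = 1/\mu_0$. This guarantees that
\begin{equation*}
M := \sup\{\lambda > 0 : g \in L^\lambda([a,b])\} \ge \lambda_0 > 0,
\end{equation*}
and the remainder of the proof of Theorem~\ref{minimum} applies verbatim. Namely, $\lambda \mapsto \|g\|_\lambda^\lambda$ is continuous on $[0,M)$ and strictly increasing (because $g$ is not a.e.\ equal to $1$), and the two cases $M < \infty$ and $M = \infty$ are resolved exactly as there by means of the intermediate value theorem: if $M < \infty$ one compares $\|g\|_M^M$ with $C$ and either takes $M' = M$ or finds a unique $M' < M$ with $\|g\|_{M'}^{M'} = C$; if $M = \infty$ one uses $\|g\|_\lambda^\lambda \to \infty$ (since $\|g\|_\infty > 1$) to produce a finite $M'$ with $\|g\|_{M'}^{M'} = C$. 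In each situation, $1/M'$ realizes the infimum in \eqref{exp}.

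The only point requiring genuine verification, and hence the main obstacle, is really the opening reduction: one must be sure that on the bounded interval $[a,b]$ the membership $h \in EXP$ guarantees $g \in L^\lambda$ for some positive $\lambda$, so that $M > 0$ and the subsequent continuity/monotonicity machinery is nonvacuous. Once this is in place, the proof is essentially a transcription of the second half of the argument for Theorem~\ref{minimum}, with the global $BLO$ control replaced by the local exponential integrability provided by the $EXP$ hypothesis.
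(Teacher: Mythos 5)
Your proposal is correct and takes essentially the same route as the paper: the paper's own (very terse) proof of this corollary simply observes that the argument for Theorem~\ref{minimum} from \eqref{min} onward carries over, with the $EXP$ hypothesis supplying the initial integrability, which is exactly your reduction. Your explicit check that $\mu_0>0$ from the $EXP$ condition forces $M \ge 1/\mu_0 > 0$ is the only detail the paper leaves implicit.
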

\begin{proof}
It suffices to use the fact that $f \in L^{\infty}$ in the proof of Theorem \ref{minimum}, so the thesis follows trivially from the proof of Theorem \ref{minimum}
\end{proof} 
\begin{rmk}
	The quantity in \eqref{exp} is a norm on $EXP$. By corollary \ref{expo}, $EXP$ with this norm is norm attaining.
	\end{rmk}

\section{Approximability} \label{approx}
\subsection{Example of an unbounded function in $\overline{L^\infty}^{BLO}$}
It follows from our new distance formula that the closure of $L^\infty$ in $BLO$ can be characterized as:
\begin{equation}
f \in \overline{L^\infty}^{BLO} \iff \forall \mu>0, \ e^{\frac{f}{\mu}} \in A_1. \label{char}
\end{equation}
Using the characterization in (\ref{char}), let us start by providing an example of an unbounded function in $\overline{L^\infty}^{BLO}$, proving the strict inclusion $L^\infty \subset \overline{L^\infty}^{BLO}$ is strict.\\ Throughout this example we restrict the attention to the interval $J=(0,\frac{1}{e})$ and consider the function $f(x)=\log(-\log(x))$, that clearly does not belong to $L^\infty(J)$.\\
However, we will prove $e^{rf(x)}=[-log(x)]^r$ is $A_1$ on $(0,\frac{1}{e})$ for arbitrarily large $r$, concluding through (\ref{char}) that it is in the closure of $L^\infty$ in $BLO$.\\
To prove this, we observe that $g_r(x)=[-log(x)]^{r}$ is decreasing and always greater than $1$ in $J$.
This helps with the extimate of $A_1(g_r)$.\\
Indeed\\
\begin{equation} \label{panegirico}
A_1(g_r)=\sup\limits_{I \subset \subset J} \frac{\fint_I g_r(x)dx }{\inf_Ig_r} =\sup\limits_{[0,b] \subset \subset J} \frac{\fint_0^b g_r(x)dx }{\inf_{[0,b]}g_r} =\sup\limits_{b \le 1/e} \frac{1}{g_r(b)}\fint_0^b g_r(x)dx.
\end{equation} 
This is because $g_r(x)$ is decreasing and for every interval $I=[a,b]$. \\ By considering the interval $I'=[0,b]$ we have the obvious inequalities:
\begin{itemize}
	\item $(g_r)_{I'}\ge (g_r)_I$
	\item $\inf_{I'}g_r=g_r(b)=\inf_Ig_r$.
\end{itemize} 
By taking $b=e^{-t}$ we have
\begin{equation} \label{bizzarro}
A_1(g_r)=\sup\limits_{t\ge 1} \frac{e^t}{t^r} \int_0^{e^{-t}}[-log(x)]^{r}dx.
\end{equation}
We will now show via the induction principle on the integer \emph{r} that the quantity of which we are taking the supremum in (\ref{panegirico}) is decreasing with respect to $t$ for every positive integer $r$. \\
As a matter of fact, the right hand side equals $1+\frac{1}{t}$ for $r=1$ so the base case of the induction principle is satisfied.\\
We will now proceed by induction and assume it is decreasing for $r-1$ and let us use this notation: $$f_r(t)=\int_0^{e^{-t}}[-log(x)]^{r}dx.$$ \\ Using integration by parts we get the recursive formula $$f_r(t)=\frac{t^r}{e^t}+rf_{r-1}(t)$$and so we obtain  $$\frac{e^t}{t^r}f_r(t)=1+\frac{r}{t}\left[\frac{e^t}{t^{r-1}}f_{r-1}(t)\right].$$\\
As the product of two positive decreasing functions is still decreasing, we proved that the right hand side in equation (\ref{bizzarro}) is decreasing in $t$ for every $n$ and this implies:
\begin{equation*}
A_1(g_r)=\fint_{0}^{1/e} [-\log(x)]^rdx
\end{equation*}
because the supremum in \eqref{panegirico} is obtained when $t=1$.\\
One could compute $A_1(g_r)$ exactly with this strategy, but for us it is enough to observe that it is finite for every $r$, i.e. $A_1(e^{rf})<\infty$ for every $r \in \mathbb{N}$ and $f(x) \in \overline{L^\infty}^{BLO}$ by the aforementioned characterization.
\subsection{Approximability by truncation}
In the previous section, we explicitely showed that the inclusion $L^\infty \subset \overline{L^\infty}^{BLO}$ is strict.\\
\begin{mydef}
Let us call $T$ the set of $BLO$ functions $f$ such that,  if we define the truncated functions,
\begin{equation*}
T_kf=\begin{cases}
k & \text{if }  f(x)\ge k\\
f(x)   & \text{if }  -k \le f(x)\le k\\
-k & \text{if } f(x)\le -k
\end{cases}
\end{equation*}
we get $\lim\limits_{k \to \infty} \|f-T_kf|_{BLO}=0$.\\
\end{mydef}
\noindent
We will now also show that both inclusions in \begin{equation} \label{obiettivo} L^\infty \subset T \subset \overline{L^\infty}^{BLO}\end{equation} are strict.\\
Our example of an unbounded function that is approximable by truncation is exactly the one from the previous section. In fact we will show that $f(x)=\log(-\log(x)) \in T$ even if it is unbounded, concluding $L^\infty \subset T$ is strict.\\ To do so, let us notice that:
\begin{equation*}
f-T_kf =\begin{cases}
f(x)-k & \text{if } f(x)\ge k\\
0 & \text{if } f(x)\le k
\end{cases}
\end{equation*}
and since $f-\{f\}_k$ is non-negative, when we compute $\|f-\{f\}_k\|_{BLO}$ we can ignore all intervals having intersection with $\{x: f(x)< k\}=(e^{-e^{k}},e^{-1})$.\\
In other words, $$\|f-T_kf|_{BLO(0,1/e)}=\|f-k\|_{BLO(0,1/e^{e^k})}=\|f\|_{BLO(0,1/e^{e^k})}.$$\\
Let us set $J_k=(0,1/e^{e^k})$. Our goal is to show that $\|f\|_{BLO(J_k)}\to 0$ as $k \to \infty$. In \cite{garcia}, Garcia showed that $e^{\|f\|_{BLO}}\le A_1(e^f)$.\\ In this case, since $e^{f(x)}=-\log(x)$:
\begin{equation*}
{\|f\|_{BLO(J_k)}}\le \log[A_1(-\log(x))]_{J_k}
\end{equation*}
where the subscript $J_k$ denotes the fact that we are considering the $A_1$ constant of $-log(x)$ over $J_k$.\\
With the same strategy as before, we get:
\begin{equation*}
[A_1(-\log x)]_{J_k}=\sup\limits_{t\ge e^k} \frac{e^t}{t} \int_0^{e^{-t}}[-log(x)]dx = \frac{e^{e^k}}{e^{k}}\int_0^{e^{-e^k}}[-log(x)]dx=\frac{1+e^k}{e^k}
\end{equation*}
so that:
\begin{equation*}
0 \le \lim\limits_{k \to \infty} \|f-T_kf|_{BLO} \le 	\lim\limits_{k \to \infty}\log[A_1(-\log(x))]_{J_k} =	\lim\limits_{k \to \infty}\log(1+\frac{1}{e^k})=0.
\end{equation*} proving the strictness of the first inclusion in (\ref{obiettivo}).\\
\subsection{Functions in $\overline{L^\infty}^{BLO}$ which are not approximable by truncation}
There we show that there exists a function which is approximable by bounded functions but not by truncation, i.e. a function $f \in \overline{L^\infty}^{BLO}$ which does not belong to $T$.\\
To do so, we will first show that any function of the type
\begin{equation} \label{tipo}
\eta(x)=\sum\limits_{n=0}^{\infty} n\chi_{[a_{n+1},a_n)} \quad \text{with } a_n \text{ strictly decreasing to } 0
\end{equation} does not belong to $T$ and then we carefully choose $\{a_n\}$ in such a way that $\eta(x)$ is in $\overline{L^\infty}^{BLO}$.\\
To prove the aforementioned claim, let us notice that, for a function $f(x)$ of the type in (\ref{tipo}), the following inequality hold: \\
\begin{equation}
\|f-\{f\}_k\|_{BLO}=\|f\|_{BLO(0,a_k)} \ge 1, \quad \forall k \in \mathbb{N}.
\end{equation}
This is because the $BLO$ norm of a jump function is at least equal to the maximum size of its jumps: one can prove that it is by choosing intervals of the tipe $[a_{n+1},a_n+\varepsilon]$ with sufficiently small $\varepsilon>0$.\\
Our example of a function in $\overline{L^\infty}^{BLO}$ which is not approximable by truncation will then be of the type described in (\ref{tipo}) with the choice $a_n=1/e^{e^n}$ (considered on the interval $J$).\\
Taking $f$ defined as \begin{center}$f(x)=\sum\limits_{n=0}^\infty n\chi_{[1/e^{e^{n+1}},1/e^{e^n})}$ \end{center}
let us compute $A_1(e^{rf})$. Let us notice that $f$ is decreasing:
\begin{equation*}
A_1(e^{rf})=\sup\limits_{b \le 1/e} \frac{\fint_0^b e^{rf(x)}dx}{e^{rf(b)}} \le \sup\limits_{n \in \mathbb{N}} \frac{\fint_0^{a_{n+1}}e^{rf(x)}dx}{e^{rn}}
\end{equation*}
where the second inequality comes from considering $a_{n+1} \le b < a_n$ and overextimating the integral average over $[0,b]$ with the one over $[0,a_{n+1}]$ since $f$ is decreasing.\\
Explicitely computing the integral via the definition we have:
\begin{equation*}
A_1(e^{rf})=\sup\limits_{n \in \mathbb{N}} \frac{1}{a_{n+1}}\sum\limits_{i=n+1}^\infty e^{r(i-n)}[a_i-a_{i+1}]
\end{equation*}and then recalling the choice of $a_n$ we get 
\begin{equation*}A_1(e^{rf})=\sup\limits_{n \in \mathbb{N}}e^{e^{n+1}}\sum\limits_{j=1}^\infty e^{rj}\left[\frac{1}{e^{e^{n+j}}}-\frac{1}{e^{e^{n+j+1}}}\right]=\sum\limits_{j=1}^\infty \frac{1}{e^{[e^n(e^j-e)-rj]}}.
\end{equation*}
Noting that the series on the right hand side is dominated by a convergent one that does not depend on $n$, i.e:
\begin{equation*}
\sum\limits_{j=1}^\infty \frac{1}{e^{[e^n(e^j-e)-rj]}}\le \sum\limits_{j=1}^\infty \frac{1}{e^{[e^j-e-rj]}}=K(r)<+\infty, \quad \forall r,n\in \mathbb{N}
\end{equation*}
shows that $f(x)$ lies in $\overline{L^\infty}^{BLO}$, but, like every other function of this type, is not approximable by truncation.\\
It is worth noticing that one can adapt the proof that $\log(-\log(x))$ is in $T$ to every positive decreasing $VLO$ function, so that all positive decreasing $VLO$ functions are in $T$. 

\subsection{Approximability by convolution}
	In \cite{mio} we also found an equivalent expression for $dist_{BLO}(f,VLO)$ showing in particular that $VLO$ is a closed subset in $BLO$.\\
The procedure we used to construct a function in $VLO$ which was as \textit{close} as possible to a given $BLO$ function was not exclusively that of convolution.\\ A natural question then arises: does the sequence of convolutions of a $VLO$ function $f$ with a standard sequence of mollifiers $\varphi_\varepsilon$ converge (in $BLO$ norm) to the original function $f$ as $\varepsilon \to 0$?\\
We will prove that it does if we choose $\varphi$ with the following properties:
\begin{itemize}
	\item $\varphi_\varepsilon>0$
	\item $supp(\varphi_\varepsilon)\subset (-\varepsilon,\varepsilon)$
	\item $\int_{-\varepsilon}^\varepsilon \varphi_\varepsilon=1$
	\item $\|\varphi_\varepsilon\|_{L^\infty}\le \frac{C}{\varepsilon}$.
\end{itemize} 
\noindent 
Since $VLO$ is closed in $BLO$ and the convolution of a $BLO$ function with a continuous function with compact support lies in $VLO$, this will prove that a function $f \in BLO$ can be approximated by a sequence of its convolutions with a family of mollifiers if and only if $f \in VLO$.\\
Let us call $W_a(f)=\sup\limits_{|I|\le a} \left[\fint_I f-\inf_I f\right]$ and $f_\varepsilon(x)=(f \star \varphi_\varepsilon)(x)$.\\
We have:
\begin{equation*}
f_\varepsilon(x)=\int_{-\varepsilon}^\varepsilon \varphi_\varepsilon(y)f(x-y)
\end{equation*}
and so:
\begin{equation*}
f_\varepsilon(x)-f(x)=\int_{-\varepsilon}^\varepsilon \varphi_\varepsilon(y)[f(x-y)-f(x)]dy\le \int_{-\varepsilon}^\varepsilon \varphi_\varepsilon(y)\left[f(x-y)-\inf_{[x-\varepsilon,x+\varepsilon]}f\right]dy
\end{equation*}
Let us multiply and divide by $2 \varepsilon$ to get:
\begin{equation*}
|f_\varepsilon(x)-f(x)|\le 2C\fint_{-\varepsilon}^\varepsilon f(x-y)-\inf_{[x-\varepsilon,x+\varepsilon]}dy \le 2CW_{2\varepsilon}(f).
\end{equation*}
But the right hand side does not depend on $x$ anymore so that:
\begin{equation*}
0 \le \|f-f_\varepsilon\|_{BLO}\le 2\|f-f_\varepsilon\|_{L^\infty} \le 4CW_{2\varepsilon}(f) \to 0
\end{equation*}
and the proof is concluded.

\section{Duality}
As we said in the introduction, thanks to \cite{feff} we know that the predual of $BMO$ is the Hardy space $H^1$, but it is still unknown who is $BMO^{*}$. To identify the dual space of $BMO$ is a very hard and interesting question.\\
Obviously, since $BLO$ and $VLO$ are not vectorial spaces, we can not talk about their dual in classical sense, and we define the following classes of linear and bounded functionals and we call them "dual" the same: \\
\begin{mydef}
Let us call "dual" of $BLO$ the following class: \\
\\
$BLO^{\ast}=\{\phi: \phi \, \text{is a linear functional defined on} \, BLO \, \text{such that} \, |\phi(f)| \le K ||f ||_{BLO} \}$\footnote{i.e. they respect $\phi(au + bv) = a\phi(u) + b\phi(v)$ for every $a,b,u,v$ such that both sides of the
	equality make sense}
\end{mydef}
                                 $$\text{and}$$
\begin{mydef}
	Let us call "dual" of $VLO$ the following class: \\
	\\
	$VLO^{\ast}=\{\phi: \phi \, \text{is a linear functional defined on} \, VLO \, \text{such that} \, |\phi(f)| \le K ||f ||_{BLO} \}$.
\end{mydef}
We are able to prove the following result: \\
\begin{thm}
 $V LO^{\ast}$ is isometric to $VMO^{\ast}$.
\end{thm}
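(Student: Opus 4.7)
The plan is to build mutually inverse maps between $VLO^{\ast}$ and $VMO^{\ast}$ whose construction rests on decomposing any $VMO$ function as a difference of two $VLO$ functions.

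First I would establish the decomposition lemma: every $f \in VMO$ admits a representation $f = u - v$ with $u, v \in VLO$ and $\|u\|_{BLO} + \|v\|_{BLO}$ controlled by $\|f\|_{BMO}$. The natural route is to unwind the Coifman--Rochberg representation from Lemma \ref{dualita} and Theorem \ref{coifdec}. These results already express any $BMO$ function as $c(\log Mg_1 - \log Mg_2) + b$, with each of the two logarithmic summands lying in $BLO$. Refining this machinery in the vanishing regime, together with the fact that $\log(Mg)$ is, up to a bounded term, in $VLO$ whenever $Mg$ enjoys an appropriate continuity at small scales, should yield the required decomposition of a $VMO$ function.

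With this in hand, I would define the restriction map $R: VMO^{\ast} \to VLO^{\ast}$ by $R\psi = \psi|_{VLO}$. It is well defined because $VLO \subset VMO$ and bounded in view of (\ref{raddoppionorme}). Injectivity of $R$ follows from the decomposition: if $\psi|_{VLO} \equiv 0$, then $\psi(f) = \psi(u) - \psi(v) = 0$ for every $f \in VMO$. Conversely, for $\phi \in VLO^{\ast}$ I would define its extension $E\phi$ on $VMO$ by $E\phi(f) = \phi(u) - \phi(v)$ for any decomposition $f = u - v$, $u, v \in VLO$. Well-definedness uses the conic linearity of $\phi$ recalled in the footnote of the definition of $VLO^{\ast}$: if $u_1 - v_1 = u_2 - v_2$, then $u_1 + v_2 = u_2 + v_1$ is itself in $VLO$, and applying $\phi$ to both sides gives $\phi(u_1) - \phi(v_1) = \phi(u_2) - \phi(v_2)$. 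Linearity of $E\phi$ on $VMO$ is checked in the same way, and $R \circ E = \mathrm{id}$, $E \circ R = \mathrm{id}$ follow formally.

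The main obstacle is upgrading the bijectivity to isometry. The easy bounds only give constant-factor equivalences: $\|R\psi\|_{VLO^{\ast}} \le 2\|\psi\|_{VMO^{\ast}}$ from (\ref{raddoppionorme}), while the triangle estimate $|E\phi(f)| \le \|\phi\|_{VLO^{\ast}}(\|u\|_{BLO} + \|v\|_{BLO})$ only yields $\|E\phi\|_{VMO^{\ast}} \le C\|\phi\|_{VLO^{\ast}}$ for whatever constant governs the decomposition. To recover isometry, I expect one must produce, for each $f \in VMO$ and each $\varepsilon > 0$, a sharp decomposition $f = u_{\varepsilon} - v_{\varepsilon}$ in $VLO$ with $\|u_{\varepsilon}\|_{BLO} + \|v_{\varepsilon}\|_{BLO} \le \|f\|_{BMO} + \varepsilon$; combined with the straightforward lower bound on $\|R\psi\|_{VLO^{\ast}}$ obtained by testing on elements of $VLO$, this would collapse the two operator norms. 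Engineering this optimal atomic-type decomposition, presumably by a careful choice of the weights $g_1, g_2$ in the Coifman--Rochberg representation and tracking the constants coming from the reverse Hölder inequality used in Theorem \ref{coifdec}, is where the real work lies.
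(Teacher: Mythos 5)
Your functional-analytic skeleton is exactly the paper's: restrict functionals from $VMO^{\ast}$ to $VLO$, and extend functionals from $VLO^{\ast}$ to $VMO$ by writing each $g\in VMO$ as a difference $g_1-g_2$ of $VLO$ functions, with the same well-posedness argument ($g_1+h_2=g_2+h_1$, then apply $\phi$). The genuine gap is the decomposition lemma itself. The paper does not derive it from the Coifman--Rochberg machinery; it invokes a theorem of Korey \cite{KOR}, which says that every $g\in VMO$ splits as $g=g_1-g_2$ with $g_1,g_2\in VLO$ and the sharp bound \eqref{dis}, $\|g_1\|_{BLO}+\|g_2\|_{BLO}\le\|g\|_{BMO}$. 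That is precisely the ``optimal atomic-type decomposition'' you identify as the place where the real work lies, and your proposal does not supply it. The route you sketch is unlikely to produce it: Lemma \ref{dualita} and Theorem \ref{coifdec}, as stated in the paper, connect $A_1$ with $BLO$ only, so splitting a generic $BMO$ function into two $BLO$ pieces already requires a factorization of $A_2$ weights as ratios of $A_1$ weights, which is nowhere in the paper; upgrading the pieces from $BLO$ to $VLO$ when $g\in VMO$ (i.e.\ controlling the small-scale oscillation of $\log Mg$) is a nontrivial refinement that is essentially the content of Korey's paper; and the constant-$1$ bound needed for an isometry rather than a mere isomorphism has no chance of falling out of the reverse H\"older constants used in Theorem \ref{coifdec}.

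A secondary remark: your worry about the factor $2$ coming from \eqref{raddoppionorme} in the restriction direction is legitimate, and the paper itself glosses over it (``operator norm at most doubled'' while claiming an isometry); with Korey's bound one gets $\|\hat{\phi}\|_{VMO^{\ast}}\le\|\phi\|_{VLO^{\ast}}$, but the displayed estimate for the restriction only controls its norm up to the constant relating $\|\cdot\|_{BMO}$ and $\|\cdot\|_{BLO}$ on $VLO$, so your more cautious ``constant-factor equivalence'' is the honest reading of those estimates. Still, as a proof of the theorem your text is incomplete: the missing ingredient is the sharp $VMO=VLO-VLO$ decomposition, and the fix is to cite Korey's theorem rather than to attempt to rebuild it from Theorem \ref{coifdec}.
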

\begin{proof} Let us observe that for $\psi \in BMO^{\ast}$ we have that $\psi_{|VLO}$ is still linear on $VLO$ and 
$$\psi_{|VLO}(f) \le ||\psi||_{VMO}^{\ast} ||f||_{BMO} \le ||\psi||_{VMO^{\ast}} 2|| f ||_{BLO}.$$
It means that $\psi_{|VLO}$ is continuous as a functional on $VLO$, with operator norm at
most doubled in the process.\\
Let us now observe that for every $\phi \in VLO^{\ast}$, we can define $\hat{\phi}$ acting on $VMO$ by using
a decomposition theorem from Korey \cite{KOR}. \\ Actually, M.B. Korey showed that for every
$g \in VMO$, there exist $g_{1}, g_{2} \in VLO$ such that $g = g_{1} - g_{2}$ and $g_{1},g_{2}$
can be chosen such that
\begin{equation} \label{dis}
||g_{1}||_{BLO} +||g_{2}||_{BLO} \le ||g||_{BMO}.
\end{equation}
 \noindent
Inspired by this observation we define
$\hat{\phi}(g) = \phi(g_{1})-\phi(g_{2})$. \\
First, we have to prove that the definition is well posed, because the decomposition
is not unique.\\
So let us notice that:\\ 
 \begin{multline}
	g_1 - g_2 = h_1 - h_2 \Rightarrow g_1 +h_2 = g_2 + h_1 \Rightarrow \phi(g_1 + h_2) = \phi (g_2 + h_1) \Rightarrow
\phi(g_1) + \phi (h_2) = \\ = $$ \phi (g_2) + \phi (h_1) \Rightarrow \phi (g_1)-\phi(g_2) = \phi(h_1)-\phi(h_2)$$
\end{multline}
so that $\hat{\phi}$ is well defined on $VMO$ (in the sense that it does not depend on the
decomposition).\\
Of course $\hat{\phi}$  is linear on $VMO$ and using \eqref{dis} we also obtain that it is bounded, with
operator norm at most equal to the operator norm of $\phi$.\\
In formulas: \\
$$|\hat{\phi}(g)| \le |\phi(g_1)|+|\phi(g_2)| \le  ||\phi||_{VLO^{\ast}} (||g_1||_{BLO} + ||g_2||_{BLO} \le ||\phi||_{V LO^{\ast}} ||g||_{BMO}.$$ \\
We now notice that ${\hat{\psi}}_{|VLO} = \psi$ for all $\psi \in  VMO^{\ast}$
and $\hat{\phi}_{|VLO} = \phi$ for all $\phi$ in
$VLO^{\ast}$,
so that we constructed a bijection between $VLO^{\ast}$
and $VMO^{\ast}$. \\
Since the action to put the hat on a function that belongs to $BLO$  and
the restriction are linear actions it is an isomorphism of vector spaces and by the
fact we were able to control the operator norm in the process it is an isometry.
\end{proof}

\begin{rmk}
$VMO^{\ast}$ is isomorphic to the Hardy space $H_1$ (see \cite{stein} p.180).
\end{rmk}
\section{Acknowledgements}
\noindent 
The author is member of the Gruppo Nazionale per
l’Analisi Matematica, la Probabilità e le loro Applicazioni (GNAMPA) of the
Istituto Nazionale di Alta Matematica (INdAM). \\
The author would like to warmly thank Professor Carlo Sbordone for having suggested her studying this problem. \\

\end{document}